\def\dfrac#1#2{\lower0.15ex\hbox{\large$\frac{#1}{#2}$}}
\newtheorem{theorem}{Theorem}[section]
\newtheorem{lemma}[theorem]{Lemma}
\newtheorem{corollary}[theorem]{Corollary}
\numberwithin{equation}{section}
\def\abs#1{\mathopen|#1\mathclose|} 
\def\kvec{{\boldsymbol{k}}}
\def\kmax{k_{\mathrm{max}}}
\def\gvec{{\boldsymbol{g}}}
\def\gmax{g_{\mathrm{max}}}
\def\lvec{{\boldsymbol{\ell}}}
\def\lmax{\ell_{\mathrm{max}}}
\def\Hrk{{\mathcal{H}_r(\kvec)}}
\def\Lrk{{\mathcal{L}_r(\kvec)}}
\def\Brk{{\mathcal{B}_r(\kvec)}}
\def\Brkzero{{\mathcal{B}^{(0)}_r(\kvec)}}
\def\Brkplus{{\mathcal{B}^{+}_r(\kvec)}}
\def\dist{\operatorname{dist}}
\def\dfrac#1#2{\lower0.15ex\hbox{\large$\frac{#1}{#2}$}}
\title{Asymptotic enumeration of\\ sparse uniform linear hypergraphs with given degrees}
\author{
Vladimir Blinovsky\\
\small Instituto de Matem{\' a}tica e Estat{\' \i}stica\\[-0.8ex]
\small Universidade de S{\~ a}o Paulo, 05508-090, Brazil\\[-0.8ex]
\small Institute for Information Transmission Problems\\[-0.8ex]
\small Russian Academy of Sciences, Moscow 127994, Russia\\
\small \tt vblinovs@yandex.ru\\
\and
Catherine Greenhill
\thanks{Research supported by the Australian Research Council Discovery Project DP140101519.} \\
\small School of Mathematics and Statistics\\[-0.8ex]
\small UNSW Australia\\[-0.8ex]
\small Sydney NSW 2052, Australia\\
\small \tt c.greenhill@unsw.edu.au\\
}
\date{13 July 2016}
\begin{document}

\maketitle

\vspace*{-1\baselineskip}

\begin{abstract}
A hypergraph is \emph{simple} if it has no loops and
no repeated edges, and a hypergraph is \emph{linear}  
if it is simple and each pair of edges intersects in at most one vertex. 
For $n\geq 3$, let $r= r(n)\geq 3$ be an integer and let
$\kvec = (k_1,\ldots, k_n)$ be a vector of nonnegative
integers, where each $k_j = k_j(n)$ may depend on $n$.
Let $M = M(n) = \sum_{j=1}^n k_j$ for all $n\geq 3$, and define
the set $\mathcal{I} = \{ n\geq 3 \mid r(n) \text{ divides } M(n)\}$.
We assume that $\mathcal{I}$ is infinite, and perform asymptotics
as $n$ tends to infinity along $\mathcal{I}$. 
Our main result is an asymptotic enumeration formula for linear
$r$-uniform hypergraphs with degree sequence
$\kvec$.  This formula holds whenever the maximum degree $\kmax$
satisfies $r^4\kmax^4(\kmax + r) = o(M)$.
Our approach is to work with the incidence matrix of a hypergraph, 
interpreted as the biadjacency matrix of a bipartite graph, enabling
us to apply known enumeration results for bipartite graphs. 
This approach also leads to a new asymptotic enumeration
formula for simple uniform hypergraphs with specified degrees,
and a result regarding the girth of random bipartite graphs
with specified degrees.
\end{abstract}

\vspace*{1\baselineskip}

\section{Introduction}\label{s:introduction}

Hypergraphs are combinatorial structures which can model
very general relational systems, including some real-world 
networks~\cite{ER,GZCN,Dih}.
Formally, a  \emph{hypergraph} or set system is defined as a pair
$(V,E)$, where $V$ is a finite set and 
$E$ is a multiset of multisubsets of $V$. 
(We refer to elements of $E$ as \emph{edges}.)
Note that under this definition, a hypergraph may contain repeated
edges and an edge may contain repeated vertices.

Any 2-element multisubset of an edge $e\in E$ is called a \emph{link}
in $e$.
If a vertex $v$ has multiplicity at least 2 in the edge $e$,
we say that $v$ is a \emph{loop} in $e$. 
(So every loop in $e$ is also a link in $e$.)
The \emph{multiplicity} of a link $\{ x,y\}$ is the number of
edges in $E$ which contain $\{ x,y\}$ (counting multiplicities).
A \emph{double link} is a link with multiplicity 2.

A hypergraph is 
\emph{simple} if it has no loops and no repeated edges: that is,
$E$ is a set of edges, and each edge is a subset of $V$.  Here
it is possible that distinct edges may have more than one vertex
in common.   (This definition of simple hypergraph
appears to be standard, and
matches the definition of simple hypergraphs given
by Berge~\cite{Berge} in the case of uniform hypergraphs.)
A hypergraph is called \emph{linear} if it
has no loops and each pair
of distinct edges intersect in at most one vertex. 
(Note that linear hypergraphs are also simple, when $r\geq 2$.)
Linear hypergraphs have been well-studied in many contexts
(sometimes they have been referred to as ``simple hypergraphs'').  
See for example~\cite{CR,FM,KK,NRSS}.

For a positive integer $r$, 
the hypergraph $(V,E)$ is $r$-\emph{uniform} 
if each edge $e\in E$ contains exactly $r$ vertices (counting multiplicities).
Uniform hypergraphs are a particular focus of study, not least
because a 2-uniform hypergraph is precisely a graph.  
We seek an 
asymptotic enumeration formula for the number of linear $r$-uniform
hypergraphs with a given degree sequence, when the maximum degree
is not too large (the sparse range), and allowing $r$ to grow slowly with $n$.

To state our result precisely, we need some definitions.
Write $[a] = \{ 1,2,\ldots, a\}$ for all positive integers $a$.
Given nonnegative integers $a$, $b$, let $(a)_b$ denote the falling factorial 
$a(a-1)\cdots (a-b+1)$.
We are given a degree sequence $\kvec = \kvec(n) = (k_{1},\ldots, k_{n})$  with sum
$M = M(n) = \sum_{i=1}^n k_i$,  and we are also given an
integer $r=r(n)\geq 3$, for each $n\geq 3$. 
Let $\kmax =\kmax(n) = \max_{j=1}^n\, k_j$ for all $n\geq 3$.
For each positive integer $t$, define 
\[ M_t = M_t(n) = \sum_{i=1}^n (k_i)_t.\]
Then $M_1=M$ and $M_t\leq \kmax\, M_{t-1}$ for $t\geq 2$.

Let $\Hrk$ denote the set of simple $r$-uniform hypergraphs on the
vertex set $[n]$ with degree sequence given by $\kvec = (k_1,\ldots, k_n)$,
and let $\Lrk$ be the set of all linear hypergraphs in $\Hrk$. 
Note that $\Hrk$ and $\Lrk$ are both empty unless $r$ divides $M$.
Our main theorem is the following.

\begin{theorem}
For $n\geq 3$, let $r= r(n)\geq 3$ be an integer and let
$\kvec = (k_1,\ldots, k_n)$ be a vector of nonnegative
integers, where each $k_j = k_j(n)$ may depend on $n$.
Let $\kmax =\kmax(n) = \max_{j=1}^n\, k_j$ for all $n\geq 3$.
Define $M = M(n) = \sum_{j=1}^n k_j$ for all $n\geq 3$, and suppose that
the set 
\[ \mathcal{I} = \{ n\geq 3 \mid r(n) \text{ divides } M(n)\}\]
is infinite.  
Suppose that $M\to\infty$ and $r^4\kmax^4(\kmax + r) = o(M)$ as $n$ tends to infinity 
along elements of $\mathcal{I}$. Then
\begin{align*}
 & |\Lrk|\\ &= 
   \frac{M!}{\left(M/r\right)!\, (r!)^{M/r}\, \prod_{i=1}^{n}\, k_i!}\,
 \exp\left( 
    {} - \frac{(r-1) M_2}{2M} 
  - \frac{(r-1)^2 M_2^2}{4M^2} 
  +  O\left(\frac{r^4\kmax^4(\kmax + r)}{M}\right) \right).
\end{align*}
\label{main}
\end{theorem}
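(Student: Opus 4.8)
The plan is to pass to the bipartite incidence model and reduce the count to a bipartite graph enumeration together with a short-cycle estimate. To each $H\in\Lrk$ I associate the bipartite graph $G(H)$ whose two parts are the vertex set $[n]$ (with prescribed degrees $k_1,\dots,k_n$) and the edge set of $H$ (each of degree $r$, since $H$ is $r$-uniform), joining a vertex $v$ to an edge $e$ precisely when $v\in e$; this is just the incidence matrix of $H$ read as a biadjacency matrix. The first task is to check that linearity of $H$ is equivalent to two local conditions on $G(H)$: that $G(H)$ has no multiple edge (equivalently, $H$ has no loop), and that $G(H)$ has no $4$-cycle (equivalently, no two edges of $H$ meet in two or more vertices, which for $r\geq 3$ also forbids repeated edges). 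Since a linear hypergraph is simple, its $M/r$ edges are distinct, so labelling the edge-side of the bipartition is an exactly $(M/r)!$-to-one operation, giving
\[
  |\Lrk| = \frac{1}{(M/r)!}\,N ,
\]
where $N$ counts labelled simple bipartite graphs with part-degrees $\kvec$ and $(r,\dots,r)$ and no $4$-cycle.

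Next I would write $N=B\cdot P$, where $B$ is the number of simple bipartite graphs with these degrees and $P$ is the probability that a uniformly random such graph is $4$-cycle-free. For $B$ I would invoke the known sparse enumeration formula for bipartite graphs (equivalently, $0$--$1$ matrices with prescribed line sums). Its row statistic is $\sum_i\binom{k_i}{2}=M_2/2$ and its column statistic is $(M/r)\binom{r}{2}=M(r-1)/2$, and the formula already delivers the leading factor $M!\big/\!\big(\prod_i k_i!\,(r!)^{M/r}\big)$ together with the exponential term $-\tfrac{(r-1)M_2}{2M}$, which is precisely the cost of forbidding multiple edges. Combined with the $(M/r)!$ above, this reproduces the claimed prefactor and the first term in the exponent, so everything reduces to estimating $P$.

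For $P$ I would determine the law of the number $X$ of $4$-cycles in a uniformly random simple bipartite graph with the given degrees. A direct configuration-model computation gives
\[
  \E X \;\sim\; \frac{1}{M^4}\Bigl(\sum_{\{i,j\}}(k_i)_2(k_j)_2\Bigr)\Bigl(\sum_{\{e,f\}}(r)_2^2\Bigr)\;\sim\;\frac{(r-1)^2M_2^2}{4M^2},
\]
which is exactly the second term in the exponent. I would then show $P=\exp\!\big(-\E X+O(\cdot)\big)$, either by a switching argument that inserts or deletes a single $4$-cycle and estimates the resulting ratio of class sizes, or by a factorial-moment calculation showing $X$ is approximately Poisson. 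The error $O\!\big(r^4\kmax^4(\kmax+r)/M\big)$ should emerge from two sources: the precision to which $\E X$ is computed (additively, to $o(1)$), and the bound on the clustering of forbidden substructures, i.e.\ configurations in which two $4$-cycles, or a $4$-cycle and a multiple edge, share an edge or a vertex; such configurations are the leading deviations from the independent-Poisson heuristic.

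The main obstacle is the last step in the regime where $\E X$ is \emph{not} bounded. Since $M_2\leq\kmax M$ one only gets $\E X=O(r^2\kmax^2)$, so the expected number of $4$-cycles may tend to infinity, and a naive Poisson limit theorem yields nothing usable. Instead one must control $\log P=-\E X+O(\cdot)$ to additive accuracy $o(1)$ while $\E X\to\infty$, which forces the switching (or moment) analysis to track the joint dependence on $r$ and $\kmax$ very carefully and to prove that all second-order interactions between distinct short cycles contribute only $O\!\big(r^4\kmax^4(\kmax+r)/M\big)$. Showing that the hypotheses $M\to\infty$ and $r^4\kmax^4(\kmax+r)=o(M)$ are exactly what make this correction vanish is where I expect most of the work to lie.
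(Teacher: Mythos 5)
Your proposal follows essentially the same route as the paper: pass to the bipartite incidence model (with the $(M/r)!$ edge-labelling factor), invoke the known sparse enumeration formula for bipartite graphs with the prescribed degrees to obtain the prefactor and the $-\tfrac{(r-1)M_2}{2M}$ term, identify linearity with the absence of $4$-cycles, and then establish $P=\exp\bigl(-\tfrac{(r-1)^2M_2^2}{4M^2}+O(\cdot)\bigr)$ by a switching argument that removes one $4$-cycle at a time, after first restricting to graphs in which short cycles do not cluster. You have correctly located the main technical burden (controlling the switching ratios and the interactions between $4$-cycles when their expected number is unbounded), which the paper handles via its properties (i)--(v), Lemmas~\ref{forwardgood}--\ref{d-switch-easy} and the summation lemma, Lemma~\ref{sumcor2}.
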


We believe that Theorem~\ref{main} is the first asymptotic enumeration
by degree sequence for $r$-uniform linear hypergraphs with $r\geq 3$, 
and the first asymptotic enumeration result for sparse hypergraphs 
which allows $r$ to grow with $n$.
The two (non-error) terms within the exponential arise naturally: the first corresponds
to the expected number of loops and the second corresponds to the expected
number of double links.

A brief survey of the relevant literature is given in the next subsection.  
Note that when $r=2$ (graphs), our result is weaker than the formula given by
McKay and Wormald~\cite{McKW91}, as 
their expression has smaller error term and more significant terms.
In order to improve the accuracy of Theorem~\ref{main} to a similar level,
a more detailed analysis of double links is required.
We will present such an analysis in a future paper.  

To obtain Theorem~\ref{main}, we treat the incidence matrix of a hypergraph as the
biadjacency matrix of a bipartite graph, thereby enabling us
to make use of prior enumeration results for bipartite graphs
in order to enumerate linear hypergraphs.
In Section~\ref{s:simple}
we show that some undesirable substructures are rare in random 
bipartite graphs with the appropriate degrees.
As a corollary of this, we obtain a new enumeration result for
sparse simple uniform hypergraphs.  
Theorem~\ref{main} then follows from a switching
argument for bipartite graphs which is used to remove 4-cycles,
as these correspond to double links in the hypergraph.
This switching argument is presented in Section~\ref{s:doubles},
leading to the proof of Theorem~\ref{main}.  Finally in
Corollary~\ref{4cycle} we state a consequence of our calculations
relating to the girth of bipartite graphs with specified degrees. 

Using this approach of translating the problem to one involving
bipartite graphs, it should be possible to relax the uniformity condition,
perhaps by 
allowing the number of edges with a given size to be specified
up to a maximum edge size (which may grow modestly with $n$).
Such a generalisation has not been performed here.

\subsection{History }\label{s:history}

In the case of graphs, the best asymptotic formula in the sparse
range is given by McKay and Wormald~\cite{McKW91}.  See that paper 
for further history of the problem.  The dense range was
treated in~\cite{ranX,MW90}, but there is a gap between these
two ranges in which nothing is known.

An early result in the asymptotic enumeration of hypergraphs
was given by Cooper et al.~\cite{CFMR}, who considered simple
$k$-regular hypergraphs when $k=O(1)$.  More recently,
Dudek et al.~\cite{DFRS} proved an asymptotic formula for 
simple $k$-regular hypergraphs with $k=o(n^{1/2})$.
In~\cite{BG} this was extended to irregular sequences, 
with an improved error bound.  We restate this result below.

\begin{theorem}
\label{th:BG}
\emph{\cite[Theorem 1.1]{BG}}\,
Let $r\geq 3$ be a fixed integer. 
Let $\kvec$, $M$ and $\kmax$ be defined as in Theorem~\ref{main}.
Assume that $r$ divides $M$ for infinitely many values of $n$.
Suppose that $M\to\infty$, $\kmax \geq 2$ and $\kmax^{3} = o(M)$
as $n$ tends to infinity along these values. 
Then
\[ |\mathcal{H}_r(\kvec)| = \frac{M!}{(M/r)!\, (r!)^{M/r}\,
 \prod_{i=1}^n k_i!}\,
   \exp\left( -\frac{(r-1)M_2}{2M} + O(\kmax^{3}/M)\right).
\]
\end{theorem}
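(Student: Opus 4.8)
The plan is to count via the \emph{configuration model} (pairing model), as is standard for graphs. Introduce a set $W = W_1\cup\cdots\cup W_n$ of $M$ \emph{points} with $\card{W_i}=k_i$, and call an (unordered) partition of $W$ into $M/r$ cells of size $r$ a \emph{configuration}; the number of configurations is $M!/\big((M/r)!\,(r!)^{M/r}\big)$. Each configuration projects onto a hypergraph on $[n]$ by sending every point of $W_i$ to the vertex $i$, and this projection lies in $\Hrk$ exactly when no cell contains two points of a common $W_i$ (no \emph{loop}) and no two cells have the same image $r$-set (no \emph{repeated edge}). As each member of $\Hrk$ is the image of precisely $\prod_i k_i!$ configurations, we get
\[
  \card{\Hrk} = \frac{M!}{(M/r)!\,(r!)^{M/r}\,\prod_i k_i!}\,\cdot\,\Prob\big(\text{a uniform random configuration is simple}\big),
\]
and the theorem reduces to showing that this probability equals $\exp\big(-(r-1)M_2/(2M)+O(\kmax^3/M)\big)$.

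First I would handle loops. The expected number of loops is $\frac{r-1}{M-1}\sum_i\binom{k_i}{2}=\frac{(r-1)M_2}{2(M-1)}$, which I call $\lambda$; note $\lambda$ may be as large as $\Theta(r\kmax)$, and it is exactly the target exponent up to an $O(\lambda/M)$ shift. To pass from ``expected loops $\approx\lambda$'' to ``$\Prob(\text{no loop})\approx e^{-\lambda}$'' with a controlled error I would run a loop-removing switching in the style of McKay and Wormald. Writing $N_j$ for the number of configurations with exactly $j$ loops, a single switching deletes a loop by exchanging one point of a loop-cell with a point drawn from a cell disjoint (in vertex classes) from it, subject to creating no new loop. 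A bijective count of forward operations out of $j$-loop configurations against reverse operations into $(j-1)$-loop configurations gives
\[
  \frac{N_j}{N_{j-1}} = \frac{\lambda}{j}\Big(1+O\big(r^2\kmax/M\big)+O\big(jr/M\big)\Big),
\]
where the correction counts the forbidden point choices (those meeting a vertex class already present, or colliding with one of the $j$ existing loops). Telescoping, and summing over the effective range $j=O(\lambda)$, yields $\Prob(\text{no loop})=N_0/\sum_j N_j=\exp\big(-\lambda+O(\kmax^2/M)\big)$ for fixed $r$, since $\sum_{j\le\lambda}(\kmax/M+j/M)=O(\kmax^2/M)$ when $\lambda=O(\kmax)$.

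Next I would dispose of repeated edges, where a first-moment bound suffices because for $r\ge3$ they are genuinely rare. The probability that a prescribed $r$-subset of points forms a cell is $(r-1)!/(M-1)_{r-1}\sim(r-1)!/M^{r-1}$, so summing over vertex $r$-subsets and the point choices realising the two coincident edges bounds the expected number of repeated edges by
\[
  \tfrac12\cdot\frac{M_2^{\,r}}{r!}\cdot\frac{\big((r-1)!\big)^2}{M^{2r-2}} = O\!\left(\frac{M_2^{\,r}}{M^{2r-2}}\right) = O\!\left(\frac{\kmax^{\,r}}{M^{\,r-2}}\right),
\]
using $\sum_S\prod_{v\in S}(k_v)_2\le M_2^{\,r}/r!$ and $M_2\le\kmax M$. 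For fixed $r\ge3$ this is $O(\kmax^3/M)$, with $r=3$ the binding case since $\kmax/M\to0$. Conditioning on the loop-free event alters this expectation only by a factor $1+o(1)$, so Markov's inequality (with a matching inclusion--exclusion lower bound) gives $\Prob(\text{no repeated edge}\mid\text{no loop})=\exp\big(O(\kmax^3/M)\big)$. Multiplying the two probabilities gives the claimed formula.

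The hard part is the loop switching, not the first-moment estimates. Because $\lambda$ itself can be as large as $\Theta(r\kmax)$, the relative per-switching error must be controlled uniformly over a window of width $\Theta(\sqrt\lambda)$ around $j=\lambda$, and the many small corrections must be shown to accumulate to only $O(\kmax^2/M)$ rather than something like $O(\lambda\kmax/M)$. The careful work is in verifying that the number of forbidden point choices really is $O(r^2\kmax)$ and that the reverse switching is near-uniform across configurations, so that the ratio $N_j/N_{j-1}$ attains the stated precision; the remainder of the argument is routine.
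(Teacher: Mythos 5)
Your proposal is correct in outline, but it is not the paper's route: it is, in essence, a reconstruction of the proof in the cited source \cite{BG} via the configuration (pairing) model. The present paper never touches configurations. It encodes a hypergraph by its incidence matrix viewed as the biadjacency matrix of a bipartite graph, so that by (\ref{step1}) one has $(M/r)!\,|\Hrk|=|\Brkzero|$, where $\Brkzero$ consists of the bipartite graphs in $\Brk$ in which no two right-vertices share their neighbourhood. The factor $\exp\bigl(-(r-1)M_2/(2M)\bigr)$ --- the part that costs you the entire loop-switching analysis --- is imported for free from the known bipartite enumeration formula (\ref{base}) of \cite{GMW}, where it arises as the correction for double edges of the bipartite multigraph, and these are exactly the hypergraph loops. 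The only new work in the paper is a first-moment estimate: two right-vertices of degree $r\geq 3$ with equal neighbourhoods would create a copy of $K_{3,2}$, and by Lemma~\ref{mckay} the expected number of such copies is $O(r^4\kmax^3/M)$ as in (\ref{eq1}), whence $|\Brkzero|/|\Brk|=1+O(r^4\kmax^3/M)$. This yields Corollary~\ref{simple-corollary}, of which Theorem~\ref{th:BG} is the fixed-$r$ case. The paper's route buys brevity and uniformity in $r$ (it permits $r\to\infty$ subject to $r^4\kmax^3=o(M)$); your route buys independence from \cite{GMW}, at the price of redoing switching machinery of the same kind that \cite{GMW} and \cite{BG} already contain. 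Your lifting identity, your loop intensity $\lambda=(r-1)M_2/(2(M-1))$, and your bound $O(M_2^{\,r}/M^{2r-2})=O(\kmax^3/M)$ for repeated edges (fixed $r\geq 3$) are all correct.

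The one step you should not label ``routine'' is the conditioning step. You bound the \emph{unconditional} expected number of repeated edges and then assert that conditioning on loop-freeness changes this expectation by only a factor $1+o(1)$. That needs a real argument: the loop-free event has probability $e^{-\lambda+o(1)}$, and $\lambda$ can be of order $\kmax$, so this probability can be smaller than any fixed power of $1/M$; Markov's inequality applied to the unconditional expectation therefore gives nothing after conditioning. The claim is true, and is provable with tools you already have: conditioning on two disjoint cells with common vertex-image $S$ being present leaves a uniform random configuration on the reduced degree sequence $k_v'=k_v-2$ for $v\in S$ and $k_v'=k_v$ otherwise, so the ratio of conditional to unconditional no-loop probabilities equals $\exp\bigl(\lambda-\lambda'+O(\kmax^2/M)\bigr)=\exp\bigl(O(\kmax^2/M)\bigr)$, by applying your own no-loop formula to both degree sequences (here $\lambda'$ is the loop intensity of the reduced sequence and $\lambda-\lambda'=O(r^2\kmax/M)$). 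Alternatively, run a second switching that removes a repeated edge while staying inside the class of loop-free configurations, which is in effect what \cite{BG} does. With that step repaired, your argument is a complete and genuinely different proof of Theorem~\ref{th:BG}.
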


Kuperberg, Lovett and Peled~\cite{KLP}
gave an asymptotic formula for the number of dense simple $r$-uniform
hypergraphs with a given degree sequence.

\section{Hypergraphs, incidence matrices and bipartite graphs}\label{s:simple}

Suppose that $G$ is an $r$-uniform hypergraph with degree sequence $\kvec$ 
which has no loops (but may have repeated edges).
Let $A$ be the $n\times (M/r)$ incidence matrix of $G$,
where the rows of the incidence matrix correspond to vertices $1,2,\ldots n$
in that order, and the columns correspond to the edges of the hypergraph, in some order.   
Then $A$ is a 0-1 matrix (as $G$ has
no loops), the row sums of $A$ are given by $\kvec$ and each column
sum of $A$ equals $r$. 

If $G$ is simple (that is, if $G\in\Hrk$) then all columns of $A$ are distinct, 
and hence there
are precisely $(M/r)!$ possible (distinct) incidence matrices
corresponding to $G$.   
Conversely, every 0-1 matrix with rows sums given by $\kvec$,
column sums all equal to $r$ and with no repeated columns can be interpreted
as the incidence matrix of a hypergraph in $\Hrk$.

It will be convenient to work with the bipartite graphs whose
biadjacency matrices are the incidence matrices of hypergraphs.
Let $\Brk$ be the set of bipartite graphs
with vertex bipartition $\{ v_1,\ldots, v_n\}\cup\{ e_1, e_2,\ldots, e_{M/r}\}$,
such that degree sequence of $(v_1,\ldots, v_n)$ is $\kvec$ and
every vertex $e_j$ has degree $r$.
We sometimes say that a vertex $v_j$ is ``on the left'' and that a vertex $e_i$
is ``on the right''.
An example of a 3-uniform hypergraph, its incidence matrix (with
edges ordered in lexicographical order) and corresponding bipartite
graph is shown in Figure~\ref{f:example}.

\begin{figure}[ht]
\begin{center}
\begin{tikzpicture}
\draw[fill] (-0.5,3) circle (0.1);
\node[above] at (-0.5,3.1) {$1$};
\draw[fill] (0.5,3) circle (0.1);
\node[above] at (0.5,3.1) {$2$};
\draw[fill] (3.4,3) circle (0.1);
\node[above] at (3.4,3.1) {$3$};
\draw[fill] (-0.5,1.5) circle (0.1);
\node[below] at (-0.5,1.4) {$4$};
\draw[fill] (1.8,1.5) circle (0.1);
\node[below] at (1.8,1.4) {$5$};
\draw[fill] (2.7,1.5) circle (0.1);
\node[below] at (2.7,1.4) {$6$};
\draw [thick,rounded corners] (-1.2,0.6) rectangle (1.5,4.2);
\draw [thick,rounded corners] (-1.0,2.5) rectangle (3.8,4);
\draw [thick,rounded corners] (-1.0,0.4) rectangle (3.8,2);
\draw [thick,rounded corners] (0.15,3.8) rectangle (3.1,0.8);
\node [left] at (7.7,2.5) {$\begin{pmatrix} 1 & 1 & 0 & 0 \\ 1 & 1 & 1 & 0\\ 1 & 0 & 0 & 0\\ 0 & 1 & 0 & 1\\ 0 & 0 & 1 & 1\\ 0 & 0 & 1 & 1\end{pmatrix}$};
\draw[fill] (9.5,5) circle (0.1);
\node[left] at (9.4,5.0) {$v_1$};
\draw[fill] (9.5,4) circle (0.1);
\node[left] at (9.4,4.0) {$v_2$};
\draw[fill] (9.5,3) circle (0.1);
\node[left] at (9.4,3.0) {$v_3$};
\draw[fill] (9.5,2) circle (0.1);
\node[left] at (9.4,2.0) {$v_4$};
\draw[fill] (9.5,1) circle (0.1);
\node[left] at (9.4,1.0) {$v_5$};
\draw[fill] (9.5,0) circle (0.1);
\node[left] at (9.4,0.0) {$v_6$};
\draw[fill] (12.5,0.5) circle (0.1);
\node[right] at (12.6,0.5) {$e_4$};
\draw[fill] (12.5,1.8) circle (0.1);
\node[right] at (12.6,1.8) {$e_3$};
\draw[fill] (12.5,3.2) circle (0.1);
\node[right] at (12.6,3.2) {$e_2$};
\draw[fill] (12.5,4.5) circle (0.1);
\node[right] at (12.6,4.5) {$e_1$};
\draw [-,thick] (9.5,5) -- (12.5,4.5) -- (9.5,4) -- (12.5,3.2) -- (9.5,5);
\draw [-,thick] (9.5,3) -- (12.5,4.5);
\draw [-,thick] (9.5,4) -- (12.5,1.8) -- (9.5,1) -- (12.5,0.5) -- (9.5,0) -- (12.5,1.8) -- (9.5,1);
\draw [-,thick] (12.5,3.2) -- (9.5,2) -- (12.5,0.5);
\end{tikzpicture}
\caption{A hypergraph, its incidence matrix and corresponding bipartite graph.}
\label{f:example}
\end{center}
\end{figure}
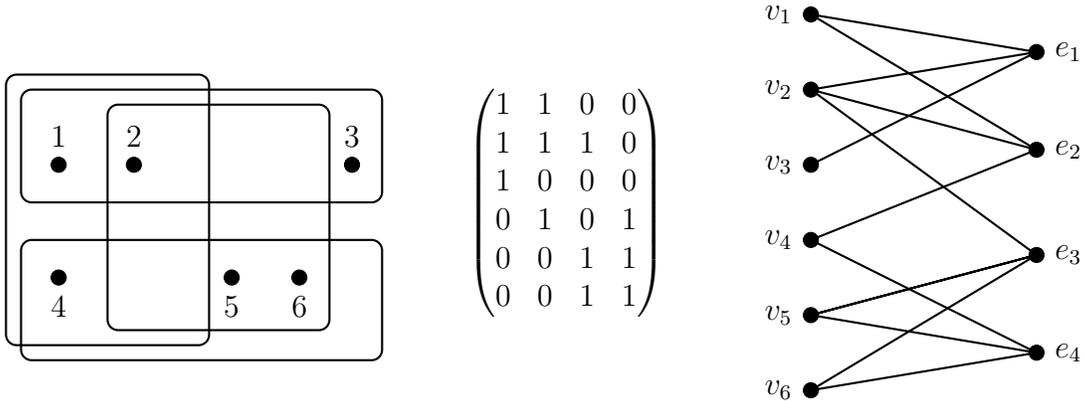

Double links will be  of particular interest: there are two double
links in the hypergraph in Figure~\ref{f:example}, and each corresponds
to a subgraph of the bipartite graph which is isomorphic to
$K_{2,2}$.  (One is induced by $\{ v_1,v_2, e_1, e_2\}$ and the other
by $\{ v_5,v_6,e_3,e_4\}$.)

It follows from~\cite[Theorem 1.3]{GMW} that
\begin{align}
\label{base}
 |\Brk| = \frac{M!}{(r!)^{M/r}\, \prod_{j=1}^n k_j!}\,
   \exp\left(-\frac{(r-1)M_2}{2M} + O(r^2\kmax^2/M)\right)
\end{align}
whenever $1\leq r\kmax = o(M^{1/2})$.
(In fact, the result of~\cite{GMW} is more accurate but we are
unable to exploit the extra accuracy here, so we state a simplified version.)

Next, let $\Brkzero$ denote the set of all bipartite graphs in
$\Brk$ such that no two vertices $e_{i_1}$, $e_{i_2}$
(on the right)
have the same neighbourhood.  These bipartite graphs correspond
to 0-1 matrices with no repeated columns, which in turn can be
viewed as incidence matrices of (simple) hypergraphs in $\Hrk$.
Hence
\begin{equation}
\label{step1}
 (M/r)!\, |\Hrk| = |\Brkzero|.
\end{equation}
To work towards linear hypergraphs, we identify some desirable properties
of the corresponding bipartite graphs.
Given an ordered pair of positive integers $(a, b)$, 
say that the bipartite graph $B$ has a 
\emph{copy of $K_{a,b}$} if $B$ contains a set of $a$ vertices on the left and
$b$ vertices on the right which induce a subgraph isomorphic to $K_{a,b}$.
This definition is slightly non-standard, since it is not symmetric with
respect to $a$ and $b$. However, we will mostly be interested in copies of
$K_{2,2}$, in which case there is no asymmetry.  We will write ``$B$ contains
a 4-cycle'' rather than ``$B$ contains a copy of $K_{2,2}$''.

Define
\begin{align*}
  N_2 &= 3\, \max\left\{ \lceil \, \log M\rceil,\, \lceil 2(r-1)^2 M_2^2/M^2\rceil\right\}
\end{align*}
and let $\Brkplus$ denote the set of all bipartite
graphs $B\in \Brk$ which satisfy the following properties:
\begin{enumerate}
\item[(i)]  $B$ has no copy of $K_{3,2}$ (with three vertices on the left and two on the right).
\item[(ii)]  $B$ has no copy of $K_{2,3}$ (with two vertices on the left and three on the right).
\item[(iii)] No two 4-cycles in $B$ have a vertex $e_j$ 
(on the right)
in common.  (This implies that any 4-cycles in $B$ are edge-disjoint.)
\item[(iv)] Any three distinct 4-cycles in $B$ involve at least five vertices
on the left.  (Together with (iii), this implies that if two distinct 4-cycles 
share a vertex on the left then any other 4-cycle in $B$ must be 
vertex-disjoint from the first two.)
\item[(v)] The number of 4-cycles in $B$ is at most $N_2$. 
\end{enumerate}
To motivate this definition, note that $B\in\Brkplus$ if
and only if 
the corresponding hypergraph $G=G(B)$ satisfies the following properties:
\begin{enumerate}
\item[(i)$'$]  The intersection of any two edges of $G$ contains 
at most two vertices.
\item[(ii)$'$]  Any link has multiplicity at most two in $G$.  
(That is, the intersection
of any three edges of $G$ contains at most one vertex.)
\item[(iii)$'$] No edge of $G$ contains more than one double link.  (That is,
if $e_1$ and $e_2$ are edges of $G$ which
share a double link then $e_1$ is not involved in any
other double link in $G$, and similarly for $e_2$.)
\item[(iv)$'$]  No vertex can belong to three double links, and if a
vertex $v$ belongs to two double links (say $\{ v,x\}$ and $\{ v,y\}$ are
both double links) then both $x$ and $y$ belong to precisely one double link.
\item[(v)$'$] There are at most $N_2$ double links in $G$.
\end{enumerate}
In particular, as $r\geq 3$, any hypergraph $G=G(B)$ with $B\in\Brkplus$
has (no loops and)
no repeated edges, so is simple.

McKay~\cite{McKay81} proved asymptotic formulae
for the probability that a randomly chosen bipartite graph with specified
degrees contains a fixed subgraph, under certain conditions.
We state one of these results below, which will be use repeatedly.
(In fact the statement below is a special case of~\cite[Theorem 3.5(a)]{McKay81}, obtained by taking $J=L$ and $H=\emptyset$ in the notation of~\cite{McKay81},
and with slightly simplified notation.)

\begin{lemma} \emph{(\cite[Theorem 3.5(a)]{McKay81})}\
Let $\mathcal{B}(\gvec)$ denote the set of bipartite graphs with 
vertex bipartition given by $\{ a_1,\ldots, a_n\}\cup\{ b_1,\ldots, b_m\}$
and degree sequence 
\[ \gvec = (g_1,\ldots, g_n;g_{1}',\ldots, g_{m}').\] 
(Here vertex $a_i$ has degree $g_i$ for $i=1,\ldots, n$, and vertex $b_j$ has
degree $g_{j}'$ for $j=1,\ldots, m$.)
Let $L$ be a subgraph of the complete
bipartite graph on this vertex bipartition, and let $\mathcal{B}(\gvec,L)$ be
the set of bipartite graphs in $\mathcal{B}(\gvec)$ which contain $L$
as a subgraph.  Write $E_{\gvec} = \sum_{i=1}^n g_i$ and 
$E_{\lvec} = \sum_{i=1}^n \ell_i$, where $\lvec = (\ell_1,\ldots, \ell_n;\ell'_1,\ldots, \ell'_m)$ 
is the degree sequence of $L$.  Finally, let $\gmax$ and $\lmax$
denote the maximum degree in $\gvec$ and $\lvec$, respectively,
and define
\[ \Gamma = 2\gmax(\gmax + \lmax-1) + 2.\]  
If $E_{\gvec} - \Gamma \geq E_{\lvec}$ then
\[ \frac{|\mathcal{B}(\gvec,L)|}{|\mathcal{B}(\gvec)|} \leq
    \frac{ \prod_{i=1}^n (g_i)_{\ell_i}\, \prod_{j=1}^m (g_j')_{\ell_j'}}{(E_{\gvec} - \Gamma)_{E_{\lvec}}}.
\]
\label{mckay}
\end{lemma}

Using this lemma, we now analyse the probability that
a uniformly random element of $\Brk$ satisfies
properties (i)--(v). 

\begin{theorem}
Under the conditions of Theorem~\ref{main}, 
\[ \frac{|\Brkplus|}{|\Brk|} = 1 + O\left(r^5\kmax^4/M\right).
\]
\label{easy}
\end{theorem}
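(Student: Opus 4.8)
The plan is to bound the probability that a uniformly random $B\in\Brk$ fails at least one of the properties (i)--(v), since $|\Brkplus|/|\Brk| = 1 - \Prob(B\notin\Brkplus)$ and a union bound gives $\Prob(B\notin\Brkplus)\leq \sum_{\text{(i)--(v)}}\Prob(\text{property fails})$. Throughout I would apply Lemma~\ref{mckay} with the degree sequence $\gvec$ whose left degrees are $\kvec$ and whose right degrees all equal $r$, so that $E_{\gvec}=M$, $\gmax=\max\{\kmax,r\}$ and $\Gamma=O((\kmax+r)^2)$. Since every forbidden subgraph $L$ below has $E_{\lvec}=O(1)$ and $\lmax=O(1)$, the hypothesis $E_{\gvec}-\Gamma\geq E_{\lvec}$ of Lemma~\ref{mckay} holds once $M$ is large (using $(\kmax+r)^2=o(M)$), and the denominator satisfies $(M-\Gamma)_{E_{\lvec}} = M^{E_{\lvec}}(1+o(1))$.

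For properties (i)--(iv) each failure event asserts the existence of a forbidden configuration, so I would bound its probability by a first-moment argument: enumerate the finitely many isomorphism types of the minimal forbidden subgraph $L$, apply Lemma~\ref{mckay} to each type, and sum over all placements, using $\sum_i (k_i)_t = M_t$ together with the repeated bound $M_t\leq\kmax^{t-1}M$. For (i) the subgraph is $K_{3,2}$, contributing $O(r^4\kmax^3/M)$; for (ii) it is $K_{2,3}$, contributing $O(r^3\kmax^4/M)$. For (iii) the configuration is a pair of $4$-cycles sharing a vertex on the right; the dominant type (two $4$-cycles meeting in a single right vertex and otherwise disjoint) has $E_{\lvec}=8$ and yields a bound of order $M_2^4 r^5/M^5 = O(r^5\kmax^4/M)$, which is the largest of the five estimates, while the types in which the two cycles also share a left vertex are denser and give strictly smaller bounds. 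For (iv) one enumerates the configurations of three $4$-cycles spanning at most four left vertices in the same way, obtaining a bound of order $O(r^5\kmax^4/M)$ or smaller.

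Property (v) is the only one requiring more than a first moment. Let $X$ be the number of $4$-cycles in $B$; applying Lemma~\ref{mckay} to a single $K_{2,2}$ gives $\E[X]=\mu$ with $\mu=\Theta((r-1)^2 M_2^2/M^2)$, so Markov's inequality alone bounds $\Prob(X>N_2)$ only by a constant (as $N_2=\Theta(\mu)$). Instead I would bound the $t$-th factorial moment $\E[(X)_t]$ by summing the Lemma~\ref{mckay} estimate over all ordered $t$-tuples of distinct $4$-cycles; the dominant contribution comes from families of pairwise disjoint $4$-cycles and gives $\E[(X)_t]\leq (C\mu)^t$ for an absolute constant $C$, the overlapping families contributing lower-order terms. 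Since $N_2$ was chosen so that $N_2\geq 3\lceil\log M\rceil$ and $N_2\geq 6(r-1)^2M_2^2/M^2$, i.e.\ $N_2\geq K\mu$ with a large implied constant $K$, a Chernoff-type bound gives $\Prob(X>N_2)\leq \rho^{N_2}$ for some constant $\rho<1$, and hence $\Prob(X>N_2)\leq M^{-c}$ with $c>1$, which is $O(1/M)\subseteq O(r^5\kmax^4/M)$.

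Summing the five contributions gives $\Prob(B\notin\Brkplus)=O(r^5\kmax^4/M)$, and this is $o(1)$ because the hypothesis $r^4\kmax^4(\kmax+r)=o(M)$ forces $r^5\kmax^4=o(M)$. The main obstacle is the factorial-moment estimate for (v): one must control the contribution of $t$-tuples of $4$-cycles that overlap by sharing left vertices (which is permitted), so that $\E[(X)_t]$ remains within a constant factor per cycle of $\mu^t$. A secondary source of bookkeeping is the case analysis in (iii) and (iv), where one must verify that every way in which the $4$-cycles can share vertices produces a probability no larger than the leading $O(r^5\kmax^4/M)$ term.
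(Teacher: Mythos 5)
Your treatment of properties (i)--(iv) is essentially the paper's: a first-moment bound via Lemma~\ref{mckay} for each forbidden configuration, with the same case analysis (the paper's Figure~2 lists exactly the configurations you describe for (iii) and (iv)) and the same dominant term $O(r^5\kmax^4/M)$ coming from two $4$-cycles meeting in a single right vertex. That part of your plan is correct and matches the paper.

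The one genuine gap is property (v). Writing $\mu=(r-1)^2M_2^2/(4M^2)$, you propose to show $\E[(X)_t]\le(C\mu)^t$ for the total $4$-cycle count $X$ and assert that overlapping $t$-tuples contribute lower-order terms; you flag this as the main obstacle, but it is precisely the estimate you never supply, and it is not routine. For example, the expected number of pairs of $4$-cycles sharing a left vertex is of order $(r-1)^4M_4M_2^2/M^4$, and the ratio of this to $\mu^2$ is $\Theta(M_4/M_2^2)$, which can be $\Theta(1)$ for very irregular $\kvec$ (say one vertex of degree $\kmax$ and the rest of degree at most one); so overlapping families are \emph{not} always lower order, and the constant $C$ must be shown to absorb every overlap pattern of every length $t\le N_2$. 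The paper sidesteps this entirely: it bounds the expected number of families of $d$ \emph{pairwise vertex-disjoint} $4$-cycles --- for which the union is a single subgraph $L$, so Lemma~\ref{mckay} gives a clean product and the expectation is at most roughly $\frac{1}{d!}\mu^d\le(e\mu/d)^d\le(e/8)^d=O(1/M)$ for $d=Q_1+1$ --- and separately bounds, in the same way, the number of ``fused pairs'' of $4$-cycles sharing a left vertex. Then, conditioning on (iii) and (iv) already holding, every $4$-cycle is either vertex-disjoint from all others or lies in exactly one fused pair, whence the number of $4$-cycles is at most $Q_1+2Q_2\le 3Q_1=N_2$ with the required probability; this decomposition is the reason $N_2$ is defined as three times a maximum of two quantities. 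To repair your argument, either carry out the overlap-pattern summation in full for the factorial moments, or adopt the disjoint-families-plus-fused-pairs decomposition, which reduces (v) to two further first-moment computations of the same kind as (i)--(iv).
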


\begin{proof}
Throughout this proof, consider a uniformly random element $B\in\Brk$.
We will apply Lemma~\ref{mckay} several times 
with $\gvec = (k_1,\ldots, k_n; r,\ldots, r)$.  
In each application, $L$ is a subgraph with constant maximum degree.
Hence
$\gmax = \max\{\kmax,r\}$ and
\[ \Gamma = 2\gmax(\gmax + \lmax-1) + 2 = O(r^2+\kmax^2).\]

For (i), let $v_{j_1}$, $v_{j_2}$, $v_{j_3} \in [n]$ be distinct vertices 
on the left, and let $e_{i_1}, e_{i_2}$ be distinct vertices on the right.
Applying Lemma~\ref{mckay} with $L = K_{3,2}$, we find that the 
probability that $B$ has a copy of $K_{3,2}$ on the vertices
$\{ v_{j_1}, v_{j_2}, v_{j_3}\}\cup\{e_{i_1},e_{i_1}\}$ is 
at most
\[ \frac{r^2(r-1)^2(r-2)^2}{(M + O(r^2+\kmax^2))_{6}}\,  (k_{j_1})_2\, (k_{j_2})_2\,
        (k_{j_3})_2.
\]
By assumption, $\kmax^2 +r^2= o(M)$.
Multiplying this by the number of choices for $\{ e_{i_1},e_{i_2}\}$
 and summing over all choices of $(j_1,j_2, j_3)$ with $1\leq j_1 < j_2
< j_3\leq n$ shows that the expected number of copies of $K_{3,2}$ in $B$ is at most
\begin{align}
 \binom{M/r}{2}\, \sum_{j_1 < j_2 < j_3} \,
    (k_{j_1})_2 \, (k_{j_2})_2\, (k_{j_3})_2\,\,
   O\left(\frac{r^6}{M^{6}}\right)\,
  &= \, O\left(\frac{r^4\, M_2^3}{M^{4}}\right) \notag\\
  &= \, O(r^4\kmax^3/M).
\label{eq1} 
\end{align}
Hence property (i) fails with probability $O(r^4\, \kmax^3/M)$.
For future reference, we note that the argument leading to (\ref{eq1})
still holds under the weaker condition $r^4\kmax^3= o(M)$ (as
this condition still implies that $\kmax^2 + r^2 = o(M)$, and
all other calculations are unchanged).

Repeating this argument with $L = K_{2,3}$  shows that property (ii)
fails with probability $O(r^3\kmax^4/M)$. Using the subgraphs
$L$ shown in Figure~\ref{fig:rare} (a) and (b)
we can establish that 
property (iii) fails with probability $O(r^5\, \kmax^4/M)$.
\begin{figure}[ht!]
\begin{center}
\begin{tikzpicture}
\draw [fill] (1,1) circle (0.1);
\draw [fill] (1,2) circle (0.1);
\draw [fill] (1,3) circle (0.1);
\draw [fill] (2,1) circle (0.1);
\draw [fill] (2,2) circle (0.1);
\draw [fill] (2,3) circle (0.1);
\draw [-,thick] (2,2) -- (1,1) -- (2,1) -- (1,2) -- (2,1);
\draw [-,thick] (1,2) -- (2,3) -- (1,3) -- (2,2) -- (1,2);
\node [below] at (1.5,0) {(a)};
\draw [fill] (4,0.8) circle (0.1);
\draw [fill] (4,1.8) circle (0.1);
\draw [fill] (4,2.2) circle (0.1);
\draw [fill] (4,3.2) circle (0.1);
\draw [fill] (5,1) circle (0.1);
\draw [fill] (5,2) circle (0.1);
\draw [fill] (5,3) circle (0.1);
\draw [-,thick] (4,0.8) -- (5,1) -- (4,1.8) -- (5,2) -- (4,0.8);
\draw [-,thick] (4,2.2) -- (5,2) -- (4,3.2) -- (5,3) -- (4,2.2);
\node [below] at (4.5,0) {(b)};
\draw [fill] (7,1.0) circle (0.1);
\draw [fill] (7,2.0) circle (0.1);
\draw [fill] (7,3.0) circle (0.1);
\draw [fill] (8,0) circle (0.1);
\draw [fill] (8,1) circle (0.1);
\draw [fill] (8,1.5) circle (0.1);
\draw [fill] (8,2.5) circle (0.1);
\draw [fill] (8,3) circle (0.1);
\draw [fill] (8,4) circle (0.1);
\draw [-,thick] (7,1.0) -- (8,0) -- (7,2.0) -- (8,1) -- (7,1.0);
\draw [-,thick] (7,1.0) -- (8,1.5) -- (7,3.0) -- (8,2.5) -- (7,1.0);
\draw [-,thick] (7,2.0) -- (8,3) -- (7,3.0) -- (8,4) -- (7,2.0);
\node [below] at (7.5,0) {(c)};
\draw [fill] (10,0.5) circle (0.1);
\draw [fill] (10,1.5) circle (0.1);
\draw [fill] (10,2.5) circle (0.1);
\draw [fill] (10,3.5) circle (0.1);
\draw [fill] (11,0) circle (0.1);
\draw [fill] (11,1) circle (0.1);
\draw [fill] (11,1.5) circle (0.1);
\draw [fill] (11,2.5) circle (0.1);
\draw [fill] (11,3) circle (0.1);
\draw [fill] (11,4) circle (0.1);
\draw [-,thick] (10,0.5) -- (11,0) -- (10,1.5) -- (11,1) -- (10,0.5);
\draw [-,thick] (10,1.5) -- (11,1.5) -- (10,2.5) -- (11,2.5) -- (10,1.5);
\draw [-,thick] (10,3.5) -- (11,3) -- (10,2.5) -- (11,4) -- (10,3.5);
\node [below] at (10.5,0) {(d)};
\draw [fill] (13,0.5) circle (0.1);
\draw [fill] (13,1.5) circle (0.1);
\draw [fill] (13,2.5) circle (0.1);
\draw [fill] (13,3.5) circle (0.1);
\draw [fill] (14,0) circle (0.1);
\draw [fill] (14,1) circle (0.1);
\draw [fill] (14,1.5) circle (0.1);
\draw [fill] (14,2.5) circle (0.1);
\draw [fill] (14,3) circle (0.1);
\draw [fill] (14,4) circle (0.1);
\draw [-,thick] (13,0.5) -- (14,0) -- (13,1.5) -- (14,1) -- (13,0.5);
\draw [-,thick] (13,1.5) -- (14,1.5) -- (13,2.5) -- (14,2.5) -- (13,1.5);
\draw [-,thick] (13,1.5) -- (14,3) -- (13,3.5) -- (14,4) -- (13,1.5);
\node [below] at (13.5,0) {(e)};
\end{tikzpicture}
\caption{Rare subgraphs}
\label{fig:rare}
\end{center}
\end{figure}
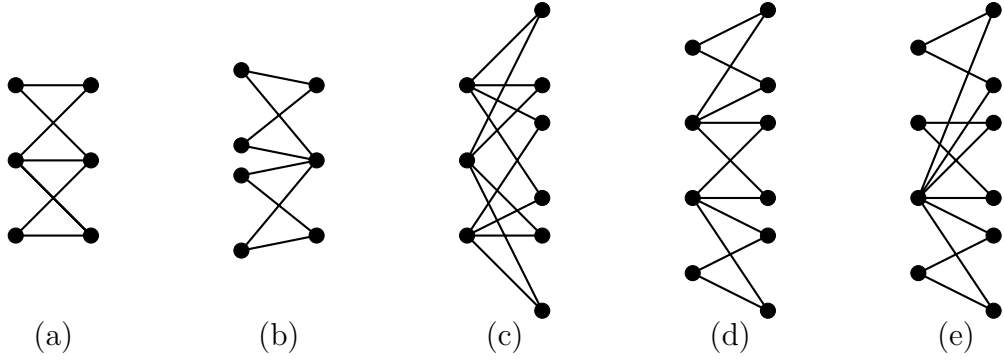
Assuming that properties (i)--(iii) hold,  we can prove
that property (iv) holds with probability $O(r^2\kmax^3/M)$
by considering the subgraphs $L$ shown in 
Figure~\ref{fig:rare} (c), (d) and (e). 

Now we turn to (v). 
Let
$Q_1 = \max\left\{ \lceil\log M\rceil,\, \lceil 2(r-1)^2 M_2^2/M^2\rceil \right\}$
and define $d=Q_1+1$.
We first show that the expected number of sets of $d$ vertex-disjoint
4-cycles in $B$ is $O(1/M)$.
Fix $(j_1,\ldots, j_{2d})\in [n]^{2d}$ such that $k_{j_\ell}\geq 2$ for 
$\ell=1,2,\ldots, 2d$
and $j_{2\ell-1}\neq j_{2\ell}$ for $\ell=1,2,\ldots, d$.
Let $(i_1,\ldots, i_{2d})\in \{ 1,\ldots, M/r\}^{2d}$ be a $(2d)$-tuple of 
(distinct) edge
labels.  The probability that there is a 4-cycle on 
$\{ v_{j_{2\ell-1}},v_{j_{2\ell}}\}\cup\{ e_{i_{2\ell-1}},e_{i_{2\ell}}\}$
for $\ell=1,\ldots, d$ is
\[ \prod_{\ell=1}^{2d} (k_{j_\ell})_2\,\, O\left( \frac{(r(r-1))^{2d}}{M^{4d}}\right),
\]
by~Lemma~\ref{mckay}.
There are at most $(M/r)^{2d}$ choices for $(i_1,\ldots, i_{2d})$, and for 
an upper bound we can sum over
all possible values of $(j_1,\ldots, j_{2d})$. 
This counts each set of $d$ vertex-disjoint 4-cycles precisely
$4^d\, d!$ times.
It follows that the expected number
of sets of $d$ vertex-disjoint 4-cycles in $B$ is
\begin{align*}
  \sum_{(j_1,\ldots, j_{2d})\in [n]^{2d}}\, 
 \prod_{\ell=1}^{2d} (k_{j_\ell})_2\,\, 
   O\left( \frac{(r-1)^{2d}}{4^d\, d!\, M^{2d}}\right)
 &=
  O\left(\frac{1}{d!}\, \left(\frac{(r-1)^{2}\, M_2^{2}}{4\,M^{2}}\right)^d\, \right)\notag \\
  &= O\left(\left(\frac{e\, (r-1)^2\, M_2^2}{4d\,M^2}\right)^d\, \right) \notag\\
  &= O\left((e/8)^d\, \right) \notag \\
  &= O(1/M) 
\end{align*}
by choice of $d$.
Next, let 
$Q_2 = \max\left\{ \lceil \log M \rceil,\, \lceil (r-1)^4 M_2^2 M_4/M^4\rceil \right\}$
and define $b=Q_2 + 1$. 
Assuming that properties (iii) and (iv) hold,
any 4-cycle in $B$ is
either vertex-disjoint from all other 4-cycles in $B$,
or shares one vertex on the left with precisely one other 4-cycle
in $B$. In the latter case, call such a pair of 4-cycles a \emph{fused pair}.  
Arguing as above, the expected number of sets of $b$ fused pairs
is at most
$O(1/M)$, by choice of $b$.
It follows that with probability $1 + O(r^5\kmax^4/M)$,
the number of 4-cycles
in $B$ is at most $Q_1 + 2Q_2\leq 3Q_1 = N_2$,
completing the proof.
\end{proof}

As a by-product of Theorem~\ref{easy}, we obtain a new asymptotic enumeration formula for
sparse simple uniform hypergraphs with given degrees, 
generalising~\cite[Theorem 1.1]{BG}
(restated earlier as Theorem~\ref{th:BG}, for ease of comparison):
the new formula allows $r$ to grow slowly with $n$, 
whereas~Theorem~\ref{th:BG} is only valid for fixed $r\geq 3$. 
(The two results match when $r$ is constant.)

\begin{corollary}
For $n\geq 3$, let $r=r(n)\geq 3$
be an integer and let $\kvec=\kvec(n) =  (k_1,\ldots, k_n)$ be a vector of
nonnegative integers, where each $k_j=k_j(n)$ may depend on $n$. Let $M = M(n) = \sum_{j=1}^n k_j$
for all $n\geq 3$, and suppose that the set
\[ \mathcal{I} = \{ n\geq 3 \mid r(n) \text{ divides } M(n)\}\]
is infinite. 
Suppose that $M\to\infty$ and $r^4\kmax^3 = o(M)$ as $n$ tends
to infinity along elements of $\mathcal{I}$. Then
\[ |\mathcal{H}_r(\kvec)| = \frac{M!}{(M/r)!\, (r!)^{M/r}\,
 \prod_{i=1}^n k_i!}\,\,
   \exp\left( -\frac{(r-1)M_2}{2M} + O(r^4\,\kmax^3/M)\right).
\]
\label{simple-corollary}
\end{corollary}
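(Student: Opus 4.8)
The plan is to exploit the inclusions $\Brkplus \subseteq \Brkzero \subseteq \Brk$ together with the identity $(M/r)!\,|\Hrk| = |\Brkzero|$ from~(\ref{step1}), but to work under the weaker hypothesis $r^4\kmax^3 = o(M)$ by isolating precisely which part of Theorem~\ref{easy} is actually needed. The key observation is that, since $r\geq 3$, any repeated edge in the hypergraph corresponds to a copy of $K_{r,2}$ in $B$, and every such copy contains a copy of $K_{3,2}$ (obtained by deleting $r-3$ of the left vertices). Consequently, any $B\in\Brk$ which satisfies property~(i) has no two right vertices with the same neighbourhood, and hence lies in $\Brkzero$. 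Writing $\mathcal{C}$ for the set of graphs in $\Brk$ satisfying property~(i), this yields $\mathcal{C}\subseteq\Brkzero\subseteq\Brk$.

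Next I would invoke the bound for property~(i) proved within Theorem~\ref{easy}. The calculation culminating in~(\ref{eq1}) shows that property~(i) fails with probability $O(r^4\kmax^3/M)$, and the remark immediately following~(\ref{eq1}) records that this estimate remains valid under the weaker condition $r^4\kmax^3 = o(M)$ --- which is exactly the hypothesis of the corollary. Hence $|\mathcal{C}| = |\Brk|\,(1 + O(r^4\kmax^3/M))$, and the sandwich inclusion forces
\[
  |\Brkzero| = |\Brk|\left(1 + O(r^4\kmax^3/M)\right).
\]

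It then remains to combine this with~(\ref{step1}) and the bipartite enumeration~(\ref{base}). Before applying~(\ref{base}) I would verify its hypothesis $r\kmax = o(M^{1/2})$: since $r\geq 3$ gives $r^2\kmax\geq 1$, we have $r^2\kmax^2\leq r^4\kmax^3 = o(M)$, so $r\kmax = o(M^{1/2})$ holds, and moreover the error term $O(r^2\kmax^2/M)$ in~(\ref{base}) is absorbed into $O(r^4\kmax^3/M)$. Dividing~(\ref{base}) by $(M/r)!$ and multiplying by the correction factor above then gives
\[
  |\Hrk| = \frac{M!}{(M/r)!\,(r!)^{M/r}\,\prod_{i=1}^n k_i!}\,
  \exp\left(-\frac{(r-1)M_2}{2M} + O(r^4\kmax^3/M)\right),
\]
as claimed.

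The only genuinely substantive point is the first one: recognising that the full strength of Theorem~\ref{easy} (and with it the stronger hypothesis of Theorem~\ref{main}) is not required here, because simplicity of the hypergraph is already guaranteed by the absence of $K_{3,2}$ alone, and the probability bound for that single event survives under the weaker assumption $r^4\kmax^3 = o(M)$. Everything after this reduction is routine bookkeeping with the error terms.
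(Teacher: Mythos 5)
Your proposal is correct and follows essentially the same route as the paper: bound $|\Brkzero|/|\Brk|$ via the expected number of copies of $K_{3,2}$ from~(\ref{eq1}) (valid under the weaker hypothesis $r^4\kmax^3=o(M)$), then combine with~(\ref{base}) and~(\ref{step1}). You merely make explicit the step the paper compresses into ``Since $r\geq 3$'' --- namely that a repeated column yields a $K_{r,2}$ and hence a $K_{3,2}$ --- and verify the hypothesis of~(\ref{base}), both of which are accurate.
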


\begin{proof}
As noted earlier, the argument leading to (\ref{eq1}) is
still valid when $r^4\kmax^3 = o(M)$.
Since $r\geq 3$, it follows from (\ref{eq1}) that
$|\Brkzero|/|\Brk| = 1 + O(r^4\kmax^3/M)$.
Combining this with (\ref{base}) and (\ref{step1})
completes the proof.
\end{proof}

\section{Double links}\label{s:doubles}

For nonnegative integers $d$, let $\mathcal{C}_{d}$
be the set of bipartite graphs in $\Brkplus$ which 
contain precisely $d$ 4-cycles. 
(The corresponding hypergraph has exactly $d$ double links.)
The sets $\mathcal{C}_d$ partition $\Brkplus$, 
and so
\begin{equation}
\label{partition}
 |\Brkplus| = \sum_{d=0}^{N_2}\, |\mathcal{C}_d|.
\end{equation}
We estimate this sum using a switching operation which we now
define. 

An 8-tuple of distinct vertices
$T= (u_1,u_2,w_1,w_2,f_1,f_2,g_1,g_2)$
is called \emph{suitable} if
\[ u_1,u_2,w_1,w_2\in \{ v_1,\ldots, v_n\}\,\,  \text{ and } \,\,
f_1,f_2,g_1,g_2\in \{ e_1,\ldots, e_{M/r}\}.\] 
A \emph{d-switching} from $B\in\mathcal{C}_d$ is described by a suitable
8-tuple $T$ of vertices of $B$ such that
\begin{itemize}
\item $B$ has a 4-cycle on $\{ u_1,u_2\}\cup\{f_1,f_2\}$,
\item $w_1g_1$ and $w_2g_2$ are edges in $B$.
\end{itemize}
The corresponding d-switching produces a new bipartite graph $B'$
with the same vertex set as $B$ and with edge set
\begin{equation}
\label{Bdef}
 E(B') = \left(E(B) \setminus \{ u_1f_1,\, u_2f_2,\, w_1g_1,\, w_2g_2\}\right)
  \cup \{ u_1g_1,\, u_2g_2,\, w_1f_1,\, w_2f_2\}.
\end{equation}
The d-switching operation is illustrated in Figure~\ref{f:d-switch-bipart}
below.  
(Note that in the hypergraph setting, the d-switching
replaces the four edges $f_1,f_2,g_1,g_2$ of the original hypergraph
with the edges $f_1',f_2', g_1', g_2'$ defined by
\[ f_j' = \left( f_j\setminus \{ u_j\}\right) \cup\{ w_j\},\quad
   g_j' = \left( g_j\setminus \{ w_j\}\right) \cup \{u_j\}\]
for $j=1,2$.)

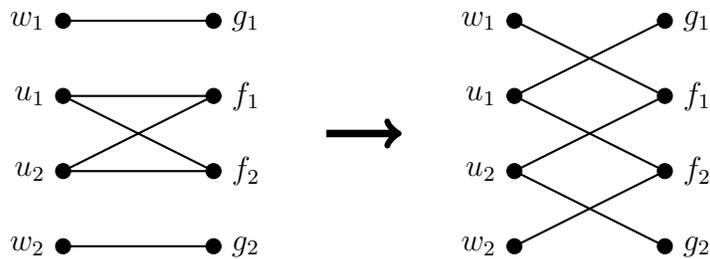
\begin{figure}[ht]
\begin{center}
\begin{tikzpicture}
\draw [fill] (1,0) circle (0.1);
\node [left] at (0.9,3) {$w_1$};
\draw [fill] (1,1) circle (0.1);
\node [left] at (0.9,2) {$u_1$};
\draw [fill] (1,2) circle (0.1);
\node [left] at (0.9,1) {$u_2$};
\draw [fill] (1,3) circle (0.1);
\node [left] at (0.9,0) {$w_2$};
\draw [fill] (3,0) circle (0.1);
\node [right] at (3.1,3) {$g_1$};
\draw [fill] (3,1) circle (0.1);
\node [right] at (3.1,2) {$f_1$};
\draw [fill] (3,2) circle (0.1);
\node [right] at (3.1,1) {$f_2$};
\draw [fill] (3,3) circle (0.1);
\node [right] at (3.1,0) {$g_2$};
\draw [-,thick] (1,3) -- (3,3);
\draw [-,thick] (1,2) -- (3,2) -- (1,1) -- (3,1) -- (1,2);
\draw [-,thick] (1,0) -- (3,0);
\draw [->,line width=1mm] (4.5,1.5) -- (5.5,1.5);
\draw [fill] (7,0) circle (0.1);
\node [left] at (6.9,3) {$w_1$};
\draw [fill] (7,1) circle (0.1);
\node [left] at (6.9,2) {$u_1$};
\draw [fill] (7,2) circle (0.1);
\node [left] at (6.9,1) {$u_2$};
\draw [fill] (7,3) circle (0.1);
\node [left] at (6.9,0) {$w_2$};
\draw [fill] (9,0) circle (0.1);
\node [right] at (9.1,3) {$g_1$};
\draw [fill] (9,1) circle (0.1);
\node [right] at (9.1,2) {$f_1$};
\draw [fill] (9,2) circle (0.1);
\node [right] at (9.1,1) {$f_2$};
\draw [fill] (9,3) circle (0.1);
\node [right] at (9.1,0) {$g_2$};
\draw [-,thick] (7,3) -- (9,2) -- (7,1) -- (9,0);
\draw [-,thick] (9,3) -- (7,2) -- (9,1) -- (7,0);
\end{tikzpicture}
\caption{A d-switching}
\label{f:d-switch-bipart}
\end{center}
\end{figure}

We say that a d-switching from $B\in\mathcal{C}_d$
specified by the (suitable) 8-tuple $T$
is \emph{legal} if the resulting bipartite graph $B'$ belongs to 
$\mathcal{C}_{d-1}$, and otherwise we say that the switching is
\emph{illegal}.

Let $\dist_{\widehat{B}}(x,y)$ denote the length of the shortest path
from $x$ to $y$ in a bipartite graph $\widehat{B}$.

\begin{lemma}
Suppose that $d\leq N_2$ is a positive integer and $B\in\mathcal{C}_d$.
With notation as above, if the d-switching from $B$ specified by $T$ 
is illegal then at least one of the following conditions must hold:
\begin{enumerate}
\item[\emph{(I)}] At least one of $g_1$ or $g_2$ belongs to a 4-cycle in $B$. 
\item[\emph{(II)}] For some $j\in \{ 1,2\}$, either 
$\dist_B(u_j,g_j)\leq 3$ or $\dist_B(w_j,f_j)\leq 3$. 
\item[\emph{(III)}] $\dist_B(g_1,g_2)= 2$.
\end{enumerate}
\label{forwardgood}
\end{lemma}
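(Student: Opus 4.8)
The plan is to prove the contrapositive: assuming that none of (I), (II), (III) holds, I would show that the d-switching specified by $T$ is legal, i.e. $B'\in\mathcal{C}_{d-1}$. The negations supply four distance facts from the failure of (II), namely $\dist_B(u_j,g_j)\geq 4$ and $\dist_B(w_j,f_j)\geq 4$ for $j=1,2$; the fact $\dist_B(g_1,g_2)\geq 4$ from the failure of (III) (distances between two vertices on the right are even, so $\neq 2$ forces $\geq 4$); and the fact that neither $g_1$ nor $g_2$ lies on a 4-cycle of $B$, from the failure of (I).

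The first step is to combine these with the structure of the target 4-cycle on $\{u_1,u_2\}\cup\{f_1,f_2\}$, which supplies the edges $u_1f_1,u_1f_2,u_2f_1,u_2f_2$, in order to rule out a short list of dangerous edges of $B$. For example, if $u_2g_1\in E(B)$ then $u_1f_1,f_1u_2,u_2g_1$ form a path of length $3$ from $u_1$ to $g_1$, contradicting $\dist_B(u_1,g_1)\geq 4$; the same reasoning gives $u_1g_2,w_1f_2,w_2f_1\notin E(B)$. Likewise, if $g_1w_2\in E(B)$ then, since $w_2g_2\in E(B)$, we get $\dist_B(g_1,g_2)=2$, contradicting the failure of (III); similarly $g_2w_1\notin E(B)$. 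Finally, the distance bounds show the four added edges $u_1g_1,u_2g_2,w_1f_1,w_2f_2$ are not already present in $B$, so $B'$ is a simple bipartite graph; since the switching preserves every degree, $B'\in\Brk$.

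Next I would track the 4-cycles. The removed edges $u_1f_1,u_2f_2$ each lie only on the target 4-cycle (by property (iii) for $B$, as $f_1,f_2$ lie on no other 4-cycle), while $w_1g_1,w_2g_2$ lie on no 4-cycle at all (by the failure of (I), as $g_1,g_2$ do not). Hence the switching destroys exactly the target 4-cycle. To see that it creates none, note that a new 4-cycle must use at least one of the four added edges, and these are pairwise vertex-disjoint, so it uses exactly one or two of them. If it uses exactly one, deleting that edge leaves a path of length $3$ in $B$ between its two endpoints, contradicting one of the four $\dist_B\geq 4$ bounds. If it uses two, they must be opposite edges of the cycle, and I would run through the six pairs: $\{u_1g_1,w_1f_1\}$ and $\{u_2g_2,w_2f_2\}$ are excluded because the cycle would need a removed edge ($w_1g_1$ or $w_2g_2$), while $\{u_1g_1,u_2g_2\}$, $\{u_1g_1,w_2f_2\}$, $\{u_2g_2,w_1f_1\}$ and $\{w_1f_1,w_2f_2\}$ are excluded because they would need one of the forbidden edges $u_2g_1$, $g_1w_2$, $g_2w_1$, $w_2f_1$ respectively. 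Thus no new 4-cycle appears.

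Finally I would conclude that the set of 4-cycles of $B'$ is exactly that of $B$ with the target removed, so $B'$ has $d-1$ 4-cycles, giving property (v). Properties (i) and (ii) transfer to $B'$ because any copy of $K_{3,2}$ or $K_{2,3}$ in $B'$ would contain a 4-cycle through one of its added edges, hence a new 4-cycle, which we have excluded; and properties (iii) and (iv) persist because the 4-cycles of $B'$ form a subset of those of $B$. Therefore $B'\in\mathcal{C}_{d-1}$ and the switching is legal. I expect the main obstacle to be the two-added-edge case in the count of new 4-cycles: the delicate point is that the forbidden edges are not ruled out directly by (II) or (III), but only after routing a short path through the target 4-cycle, so one must match each of the six configurations with precisely the right distance bound.
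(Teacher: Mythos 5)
Your proposal is correct and, although framed as a contrapositive, follows essentially the same route as the paper: the same case analysis appears in both (extra destroyed 4-cycles handled by (I), new 4-cycles containing exactly one new edge handled by (II), new 4-cycles containing two new edges reduced to the same forbidden edges $u_2g_1$, $w_\ell g_j$ and $w_2f_1$ via (II) and (III), and failures of properties (i)--(v) for $B'$ reduced to the presence of a 4-cycle through a new edge). The extra detail you supply in verifying that the switching is in fact legal is sound and simply completes the reverse direction of the paper's argument.
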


\begin{proof}
Fix $B\in\mathcal{C}_{d}$ and let $T$ describe a d-switching from $B$
such that the resulting bipartite graph $B'$ does not belong to 
$\mathcal{C}_{d-1}$.
First, suppose that $B'\in\Brkplus$ but that $B'$ contains 
at most $d-2$ 4-cycles.  Then the d-switching has destroyed
more than one 4-cycle, which implies that (I) holds.

Next, suppose that there exists a 4-cycle in $B'$ which contains
an edge of $E(B')\setminus E(B)$.  Call these \emph{new edges}.
Such a 4-cycle has been (accidently) created by the d-switching. 

First suppose that $B'$ contains a 4-cycle which involves
precisely one new edge.
If $B'$ contains a 4-cycle which involves the
edge $u_jg_j$ (for some $j\in \{ 1,2\}$)
 but does not involve any other new edge then
$\dist_B(u_j,g_j)\leq 3$, which implies that (II) holds.
Similarly, if $B'$ contains a 4-cycle which contains the
edge $w_jf_j$ for some $j\in\{1,2\}$, but contains no other new edge,
then $\dist_B(w_j,f_j)\leq 3$,
which again implies that (II) holds.
Now suppose that there are 4-cycles in $B'$ which contain at least
two new edges.
If a 4-cycle in $B'$ contains both
$w_1f_1$ and $w_2f_2$ then $w_1f_2\in E(B)$, which implies that
(II) holds. 
If a 4-cycle in $B'$ contains both $u_1g_1$ and $u_2g_2$
then $u_2g_1\in E(B)$, so (II) holds.
No 4-cycle in $B'$ can
contain both $u_jg_j$ and $w_jf_j$ for some $j\in \{1,2\}$, 
since the edge $u_j f_j$ is not present in $B'$.
Next, suppose that there is a 4-cycle in $B'$ which contains 
both $u_jg_j$ and $w_\ell f_\ell$, where $\{ j,\ell\} = \{1,2\}$.
Then $w_\ell g_j\in E(B)$, so (III) holds. 

The arguments given above cover the case that
$B'\in\Brkplus$ but that $B'$ has
strictly more than $d-1$ 4-cycles, since the d-switching must
have introduced at least one new 4-cycle.  Furthermore,
it follows from properties (i)--(v) that if
$B'\not\in \Brkplus$ then there must be a 4-cycle
in $B'$ which contains a new edge.
(For example, if (i) fails for $B'$
then there is a copy of $K_{3,2}$ involving at least one new edge,
but then that new edge is contained in at least one 4-cycle in $B'$.)
Hence this case is also covered by the above arguments, completing
the proof.
\end{proof}

A \emph{reverse d-switching} 
is the reverse of a d-switching.
A reverse d-switching from a bipartite graph
$B'\in\mathcal{C}_{d-1}$ 
is described by a suitable 8-tuple $T$ of vertices
such that
\[ u_1g_1,\,\, u_2g_2,\,\, u_1f_2,\,\, u_2f_1,\,\, w_1f_1,\, 
  w_2f_2\]
are all edges of $B'$.  The reverse d-switching produces
the bipartite graph $B$ defined by (\ref{Bdef}).
This operation is depicted in Figure~\ref{f:d-switch-bipart} by
following the arrow in reverse.

Given $B'\in\mathcal{C}_{d-1}$, we say that a reverse
d-switching from $B'$ specified by the (suitable) 8-tuple $T$ 
is \emph{legal} if the resulting bipartite graph $B$
belongs to $\mathcal{C}_d$, and otherwise we say that the switching
is \emph{illegal}.  

The proof of the following is very similar to the proof of 
Lemma~\ref{forwardgood}, but for completeness we give it in full.

\begin{lemma}
Suppose that $d\leq N_2$ is a positive integer and $B'\in\mathcal{C}_{d-1}$.
With notation as above, if the reverse switching from $B'$
specified by $T$ is illegal then at
least one of the following conditions must hold:
\begin{enumerate}
\item[\emph{(I${}'$)}] At least one of $u_1,u_2,f_1,f_2,g_1,g_2$ belongs to a 4-cycle in $B'$. 
\item[\emph{(II${}'$)}] 
For some $j\in \{ 1,2\}$, either
$\dist_{B'}(u_j,f_j)\leq 3$ or $\dist_{B'}(w_j,g_j)\leq 3$.
\end{enumerate}
\label{reversegood}
\end{lemma}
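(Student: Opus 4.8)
The plan is to mirror the proof of Lemma~\ref{forwardgood}, tracking the effect of the reverse switching on the 4-cycles and the substructures (i)--(v). First I would fix $B'\in\mathcal{C}_{d-1}$ and a suitable $T$ whose reverse switching yields $B\notin\mathcal{C}_d$, and record the bookkeeping coming from (\ref{Bdef}): the reverse switching \emph{deletes} the edges $u_1g_1,u_2g_2,w_1f_1,w_2f_2$ (all present in $B'$) and \emph{inserts} the edges $u_1f_1,u_2f_2,w_1g_1,w_2g_2$ (all absent from $B'$), while $u_1f_2$, $u_2f_1$ and every other edge are untouched. In particular $B$ acquires the 4-cycle on $\{u_1,u_2\}\cup\{f_1,f_2\}$, and since $u_1f_1\notin E(B')$ this cycle is genuinely new; I will call it the \emph{intended} 4-cycle. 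Illegality means that $B$ has a number of 4-cycles different from $d$, or else $B\notin\Brkplus$.

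I would first dispose of the case where the switching lowers the 4-cycle count. If $B$ has fewer than $d$ 4-cycles, then, since the intended cycle is created, the switching must have destroyed at least one 4-cycle of $B'$. Any destroyed 4-cycle must use one of the deleted edges $u_1g_1,u_2g_2,w_1f_1,w_2f_2$, and hence passes through one of $u_1,u_2,f_1,f_2,g_1,g_2$; this is exactly (I$'$).

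The remaining possibilities---$B$ having \emph{more} than $d$ 4-cycles, or $B\notin\Brkplus$---all produce a 4-cycle of $B$ that uses a newly inserted edge and is distinct from the intended one. (For a structural failure of (i)--(iv) in $B$ one extracts such a cycle exactly as in Lemma~\ref{forwardgood}: e.g. a $K_{3,2}$ forced into $B$ must involve a new edge, since $B'\in\Brkplus$, and it then contains a 4-cycle through a single new edge.) As the four new edges are pairwise vertex-disjoint, such a 4-cycle uses at most two of them. If it uses exactly one new edge, deleting that edge leaves a length-$3$ path in $B'$ joining its endpoints, so $\dist_{B'}(u_j,f_j)\le 3$ or $\dist_{B'}(w_j,g_j)\le 3$, which is (II$'$). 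If it uses two new edges I would argue pair by pair: the pairs $\{u_jf_j,\,w_jg_j\}$ are impossible, since the connecting edges $u_jg_j$, $w_jf_j$ were just deleted; the pair $\{u_1f_1,\,u_2f_2\}$ forces the intended cycle, which is excluded; each mixed pair $\{u_jf_j,\,w_\ell g_\ell\}$ with $\{j,\ell\}=\{1,2\}$ forces old edges that give a length-$3$ path in $B'$ (for instance $\{u_1f_1,w_2g_2\}$ yields the path $w_2,f_1,u_2,g_2$), hence (II$'$); and the pair $\{w_1g_1,\,w_2g_2\}$ forces the old edge $w_2g_1$, which together with $u_1g_1,u_1f_2,w_2f_2\in E(B')$ exhibits a 4-cycle of $B'$ through $g_1$, giving (I$'$).

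The main obstacle is the careful edge bookkeeping in the two-new-edge subcases---deciding, for each pair, whether the forced connecting edges survive in $B'$---and the closing verification that every way for $B$ to leave $\Brkplus$ (as opposed to merely miscounting 4-cycles) manifests as a non-intended 4-cycle through a new edge, so that the subcase analysis is exhaustive. Both are routine adaptations of the corresponding steps of Lemma~\ref{forwardgood}; the only genuine novelty is that the analogue of condition~(III) there now collapses into (I$'$) or (II$'$), which is precisely why the reverse statement requires only two conditions rather than three.
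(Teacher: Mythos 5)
Your overall strategy matches the paper's, and most of the details are sound --- in fact your treatment of the two-new-edge pairs is cleaner than the paper's in one spot (for the pair $\{w_1g_1,w_2g_2\}$ you correctly land in (I${}'$) via an explicit 4-cycle of $B'$ through $g_1$, whereas the paper asserts (II${}'$) there). But there is one genuine gap: your claimed reduction that \emph{every} way for $B$ to leave $\Brkplus$ ``manifests as a non-intended 4-cycle through a new edge'' is false for properties (iii) and (iv). Consider a $B'$ in which some old 4-cycle already passes through $f_1$; the reverse switching can create the intended 4-cycle on $\{u_1,u_2\}\cup\{f_1,f_2\}$, destroy nothing, and create nothing else, so $B$ has exactly $d$ 4-cycles and every 4-cycle of $B$ other than the intended one is an old 4-cycle of $B'$. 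Yet $B\notin\Brkplus$, because the intended cycle and the old cycle share the right-vertex $f_1$, violating (iii). Here there is no non-intended 4-cycle containing a new edge, so your exhaustive subcase analysis never fires. The same issue arises for (iv) via $u_1$ or $u_2$. This is precisely where the reverse lemma differs from Lemma~\ref{forwardgood}: in the forward direction the switching destroys the designated 4-cycle rather than creating one, so every new-edge 4-cycle of the resulting graph is accidental and the reduction is clean; in the reverse direction the intended cycle itself can be the culprit.

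The repair is short and is what the paper does: if (iii) fails for $B$ without any accidental new-edge 4-cycle, the offending pair must consist of the intended cycle and an old cycle of $B'$ meeting it in $f_1$ or $f_2$ (two old cycles clashing would contradict $B'\in\Brkplus$), so $f_1$ or $f_2$ lies on a 4-cycle of $B'$ and (I${}'$) holds; likewise a failure of (iv) forces $u_1$ or $u_2$ onto a 4-cycle of $B'$ (using that $B'$ satisfies (iii) and (iv), so the two old cycles among the offending triple cover at least three left-vertices, and the triple can only drop to four left-vertices if $\{u_1,u_2\}$ meets them). With that case added, your argument is complete; failures of (i), (ii) do reduce to an accidental new-edge 4-cycle as you say, and (v) is immune because $d\leq N_2$.
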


\begin{proof}
Fix $B'\in\mathcal{C}_{d-1}$ and let $T$ describe a reverse d-switching from $B'$
such that the resulting bipartite graph $B$ does not belong to $\mathcal{C}_d$.
First, suppose that $B\in\Brkplus$ but that $B$ contains
at most $d-1$ 4-cycles.  Then the reverse d-switching has
destroyed at least one 4-cycle, so (I$'$) holds.

Clearly any new 4-cycle in $B$ created by the reverse d-switching
must contain at least one edge of $E(B)\setminus E(B')$. (Again, we
call these \emph{new edges}.)
Of course, the reverse d-switching is designed to create a 
new 4-cycle involving the edges $u_1f_1,\, u_1f_2$, but here we
are only interested in other 4-cycles which may ``accidently'' be
created by the reverse d-switching.

If a 4-cycle in $B$ contains precisely one new edge then 
(II$'$) holds in $B'$.   Next suppose that a new 4-cycle
in $B$ contains at least two new edges.
If any new 4-cycle in $B$ contains both $w_1g_1$ and $w_2g_2$
then $w_1g_2\in E(B')$. This gives a 4-cycle in $B'$ involving $u_2$,
and so (II$'$) holds.  No new 4-cycle in $B$ can contain both 
$u_jf_j$ and $w_j g_j$, 
since $u_jg_j\not\in E(B)$, for any $j \in\{ 1,2\}$.
Next, if $u_jf_j$ and $w_\ell g_\ell$ belong to a 4-cycle in $B$,
where $\{ j,\ell\} = \{1,2\}$, then
$u_j g_\ell\in E(B')$ and (II$'$) holds.

The above argument covers the possibility that $B'\in \Brkplus$ 
but that $B$ contains more than $d$ 4-cycles.
Now suppose that $B$ contains precisely $d$ 4-cycles but $B\not\in\Brkplus$.
Note that property (v) holds, by our assumption on $d$.
If property (i) or (ii) fails for $B$ then at least one additional
4-cycle has been created by the reverse d-switching, which was
covered by the above argument.
If property (iii) fails for $B$ then either $f_1$ or $f_2$
must belong to a 4-cycle in $B'$, 
while if property (iv) fails for $B$ then either $u_1$ or $u_2$
must belong to a 4-cycle in $B'$. Thus (I$'$) holds in both cases,
completing the proof.
\end{proof}

We will analyse d-switchings to obtain an asymptotic expression
for $|\mathcal{C}_d|/|\mathcal{C}_{d-1}|$, and then combine these
to find an expression for $|\mathcal{L}_r(\kvec)| = |\mathcal{C}_0|/(M/r)!$,
which is the quantity of interest.
First we analyse one d-switching.

\begin{lemma}
Assume that the conditions of Theorem~\ref{main} hold.
Let $d'$ be the first value of $d\leq N_2$ such that $\mathcal{C}_d=\emptyset$,
or $d' = N_2+1$ if no such value exists.  If $d\in \{ 1,\ldots, d'-1\}$ then
\[ |\mathcal{C}_{d}|
    = |\mathcal{C}_{d-1}| \,
\frac{(r-1)^2 M_2^2}{4dM^2}\, 
   \left( 1 + O\left(\frac{d\kmax(\kmax + r)+r^2\kmax^3}{M_2}\right)\right).
\]
\label{d-switch-easy}
\end{lemma}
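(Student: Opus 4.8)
The plan is to estimate the ratio $|\mathcal{C}_d|/|\mathcal{C}_{d-1}|$ by counting the (legal) d-switchings between these two sets in two ways, following the standard switching method. First I would define $N(B)$ to be the number of legal d-switchings applicable to a graph $B\in\mathcal{C}_d$, and $N'(B')$ to be the number of legal reverse d-switchings applicable to $B'\in\mathcal{C}_{d-1}$. Since every legal d-switching from some $B\in\mathcal{C}_d$ produces a graph in $\mathcal{C}_{d-1}$, and is exactly reversed by a legal reverse d-switching, double counting the set of (legal) switching pairs gives the identity
\[
 \sum_{B\in\mathcal{C}_d} N(B) = \sum_{B'\in\mathcal{C}_{d-1}} N'(B'),
\]
so that $|\mathcal{C}_d|/|\mathcal{C}_{d-1}| = \overline{N'}/\overline{N}$, where $\overline{N}$ and $\overline{N'}$ denote the respective averages. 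The whole task then reduces to obtaining sharp estimates for these two averages.

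The main computation is the estimate of the number of reverse d-switchings, since this produces the leading factor. I would first count \emph{all} reverse d-switchings (legal or not) from a fixed $B'\in\mathcal{C}_{d-1}$: one must select $u_1,u_2$ on the left together with their incident right-neighbours $g_1,g_2$ (the edges to be broken at $u_1,u_2$) and $f_1,f_2$ on the right, with $w_1,w_2$ on the left supplying the edges $w_1f_1,w_2f_2$; the constraints $u_1f_2,u_2f_1\in E(B')$ mean $\{u_1,u_2\}\cup\{f_1,f_2\}$ already forms a 4-cycle in $B'$, whose count can be related back to $d-1$ and to the degree sums. The dominant contribution should come from summing $(k_i)_2$ over choices of $w_1,w_2$ and $r(r-1)$-type factors over the edge choices, yielding a leading term proportional to $(r-1)^2 M_2^2/(M^2)$; the combinatorial symmetry factor accounting for the interchangeability of the two fused 4-cycles (and of $(u_j,w_j,f_j,g_j)$ pairs) is what produces the $1/(4d)$ — more precisely, the factor $d$ arises because the forward count of 4-cycles to be destroyed carries a factor $d$, which appears in the denominator once the ratio is formed. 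The count of forward d-switchings $\overline N$ is comparatively easy: from $B\in\mathcal{C}_d$ one chooses one of the $d$ existing 4-cycles to destroy (giving $u_1,u_2,f_1,f_2$) and then two arbitrary edges $w_1g_1,w_2g_2$ to rewire, so the total count is $d$ times a product of edge-choice factors of order $M^2$, up to lower-order corrections.

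The error terms are controlled by bounding the number of \emph{illegal} switchings using Lemmas~\ref{forwardgood} and~\ref{reversegood}. For each of the enumerated obstructions — a chosen right-vertex already lying on a 4-cycle (there are $O(d)$ such 4-cycles by property~(v), each contributing $O(\kmax r)$ nearby vertices), or a short-distance condition such as $\dist_{B'}(u_j,f_j)\le 3$ or $\dist_{B'}(w_j,g_j)\le 3$ — I would bound the number of vertices within distance $3$ of a fixed vertex by $O(\kmax^2 r^2)$ (alternating degree bounds $\kmax$ and $r$ along a path of length $\le 3$), and multiply by the number of remaining free choices. Comparing these counts against the leading term of order $M^2$ per switching gives relative errors of order $d\kmax(\kmax+r)/M_2$ from the 4-cycle-avoidance conditions and $r^2\kmax^3/M_2$ from the distance conditions, which are exactly the two terms appearing inside the $O(\cdot)$ in the statement. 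I expect the main obstacle to be the careful bookkeeping of the symmetry/multiplicity factors — in particular correctly accounting for the interchangeability of labels that yields the constant $4$ and the factor $d$ — and ensuring that the distance-$\le 3$ bounds are uniform enough, under the hypothesis $r^4\kmax^4(\kmax+r)=o(M)$, that all illegal contributions are genuinely lower order than the main term $(r-1)^2M_2^2/(4dM^2)$.
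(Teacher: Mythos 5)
Your overall strategy is the paper's: double-count legal d-switchings between $\mathcal{C}_d$ and $\mathcal{C}_{d-1}$, estimate the forward count as $4dM^2$ and the reverse count as $(r-1)^2M_2^2$ up to relative errors, and control the illegal switchings via Lemmas~\ref{forwardgood} and~\ref{reversegood}. Your attribution of the two error terms (the $d\kmax(\kmax+r)/M_2$ from forbidding the chosen vertices to lie on existing 4-cycles, the $r^2\kmax^3/M_2$ from the distance-at-most-3 conditions) is also essentially correct, as is the origin of the factors $4$ and $d$ in the forward count.

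There is, however, a concrete error at the heart of your reverse count. A reverse d-switching from $B'$ requires the six edges $u_1g_1$, $u_2g_2$, $u_1f_2$, $u_2f_1$, $w_1f_1$, $w_2f_2$ to be present in $B'$, while $u_1f_1$ and $u_2f_2$ are \emph{absent} from $B'$ (they are precisely the edges the switching creates). Hence $\{u_1,u_2\}\cup\{f_1,f_2\}$ does \emph{not} induce a 4-cycle in $B'$ --- only two of the four $K_{2,2}$ edges are present --- and the count cannot be ``related back to $d-1$''. If it could, the reverse count would be proportional to the number of 4-cycles already in $B'$, giving something of order $d\,r^2\kmax^2$ rather than $(r-1)^2M_2^2$, and the ratio in the lemma would not come out. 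The structure actually being counted is two ordered paths of length two centred at left vertices, namely $g_1u_1f_2$ and $g_2u_2f_1$ (each contributing a factor $M_2=\sum_i (k_i)_2$, coming from the choices of $u_1$ and $u_2$, not of $w_1,w_2$ as you suggest), together with one further neighbour $w_1$ of $f_1$ and $w_2$ of $f_2$ (each contributing $r-1$); this yields the leading term $(r-1)^2M_2^2$ with no factor of $d$, and the $1/M^2$ you mention belongs to the final ratio, not to the reverse count itself. Your later sentence giving a leading term proportional to $(r-1)^2M_2^2/M^2$ is thus internally inconsistent with your 4-cycle claim; the plan as written would fail at this step unless the configuration is corrected as above.
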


\begin{proof}
Fix $d\in \{ 1,\ldots, d' - 1\}$ and let $B\in\mathcal{C}_{d}$ be given.  
Let $\mathcal{S}$ be the set of all suitable 8-tuples $T$ such that
\begin{itemize}
\item $B$ contains a 4-cycle on $\{ u_1,u_2,f_1,f_2\}$,
\item the edges $w_1g_1$, $w_2g_2$ belong to $B$, and
\item no 4-cycle in $B$ contains $g_1$ or $g_2$.
\end{itemize}
Then $\mathcal{S}$ contains every 8-tuple which defines a legal d-switching
from $B$, so $|\mathcal{S}|$ is an upper bound for the number of legal
d-switchings from $B$.  There are precisely $d$
4-cycles, and 4 ways to order the vertices $(u_1,u_2,f_1,f_2)$.
For an upper bound, there are at most $M^2$ ways to choose the edges
$w_1g_1$, $w_2g_2$ in order, giving $|\mathcal{S}|\leq 4d M^2$.
To give a lower bound, we must ensure that all vertices are distinct
and that $g_1$ and $g_2$ are not contained in any 4-cycle.
Given $(u_1,u_2,f_1,f_2)$, there are at least
\[ (M - (2rd + 2\kmax))(M - ((2d+1)r + 3\kmax))\]
good choices for $(w_1,w_2,g_1,g_2)$. Hence
\[
 |\mathcal{S}| = 4d M^2\left( 1 +O\left(\frac{rd + \kmax}{M}\right)\right).
\]
We now find obtain an upper bound for the number of 8-tuples in $\mathcal{S}$
which give rise to illegal d-switchings from $B$, and subtract this value
from $|\mathcal{S}|$.  By Lemma~\ref{forwardgood} it suffices to find an upper
bound for the number of 8-tuples in $\mathcal{S}$ which satisfy Condition (II)
or Condition (III).  Observe that no 8-tuple in $\mathcal{S}$ satisfies 
Condition (I), by definition of $\mathcal{S}$.
For Condition (II), there are 
$O(dr\kmax M)$ 8-tuples in $\mathcal{S}$ such that 
an edge exists in $B$ from $u_1$ to $g_j$ or
from $w_j$ to $f_j$, for some $j\in \{1,2\}$.  Similarly, there are 
$O(dr^2 \kmax^2 M)$ 8-tuples for which $\dist_B(u_1,g_j)=3$ or 
$\dist_B(w_j,f_j)=3$, for some $j\in\{1,2\}$.
Hence Condition (II) fails for $O(dr^2\kmax^2 M)$ 8-tuples in $\mathcal{S}$.

Similarly, there are $O(d r^2 \kmax M)$ 8-tuples in $\mathcal{S}$ which
satisfy Condition (III).  Combining these contributions, it follows that
there are 
\begin{equation}
\label{nr-forward}
  4dM^2\left(1 + O\left(\frac{rd+r^2\kmax^2 }{M}\right)\right)
\end{equation}
suitable 8-tuples which give a legal d-switching from $B$.

Next, suppose that $B'\in\mathcal{C}_{d-1}$ (and note that $\mathcal{C}_{d-1}$
is nonempty, by definition of $d'$).  Let $\mathcal{S}'$ be the set of
all suitable 8-tuples such that
\begin{itemize}
\item $u_1 g_1$, $u_2  g_2$, $u_1 f_2$, $u_2 f_1$, $w_1 f_1$, $w_2 f_2$
are all edges of $B'$, and
\item no 4-cycle in $B'$ contains a vertex from $\{ u_1,u_2,f_1,f_2,g_1,g_2\}$.
\end{itemize}
Again, $\mathcal{S}'$ contains every 8-tuple which describes a legal reverse
d-switching from $B'$.  Hence there are at most $|\mathcal{S}'|$ legal
reverse d-switchings from $B'$.  There are at most
$M_2$ ways to choose $(u_1,f_2,g_1)$ and at most $M_2$ ways to choose
$(u_2,f_1,g_2)$, and then at most $(r-1)^2$ ways to choose $(w_1,w_2)$.
Therefore $|\mathcal{S}'| \leq (r-1)^2 M_2^2$.  

For a lower bound, we
must ensure that all vertices are distinct and that we avoid choosing
$u_1,u_2,f_1,f_2,g_1,g_2$ from a 4-cycle. 
We can choose $(u_1,f_2,g_1)$, avoiding vertices contained in 4-cycles, 
in at least
$M_2 - 2(d-1)\kmax(\kmax + 2r)$ ways.  There are still precisely $(r-1)$
choices for $w_2$ from among all neighbours of $f_2$ other than $u_1$.
Next, there are at least
\[ M_2 - 2(d-1)\kmax(\kmax + 2r) - 3\kmax^2 - 4r\kmax - 2r\kmax^2\]
ways to choose $(u_2,f_1,g_2)$ avoiding vertices contained in 4-cycles 
and avoiding those vertices already chosen, such that $f_1$ is not
a neighbour of $u_1$ or $w_2$ in $B'$.  This choice of $f_1$ ensures
that all $r-1$ neighbours of $f_1$ other than $u_2$
are also distinct from $\{ u_1,w_2\}$, so there are still $r-1$
choices for $w_2$.
It follows that
\[ |\mathcal{S}'| = (r-1)^2 M_2^2\left(1 + 
              O\left(\frac{d\kmax(\kmax + r) + r\kmax^2}{M_2}\right)\right).
\]
Now we calculate an upper bound for the number of 8-tuples in $\mathcal{S}'$
which give an illegal reverse d-switching from $B'$. By Lemma~\ref{reversegood},
it suffices to find an upper bound for the number of 8-tuples in $\mathcal{S}'$
which satisfy Condition (II$'$). (Note that no element of $\mathcal{S}'$
can satisfy Condition (I$'$), by definition of $\mathcal{S}'$.)
There are $O(r^3\kmax^2 M_2)$ elements of $\mathcal{S}'$ such that
there is an edge from $w_j$ to $g_j$
or an edge from $u_j$ to $f_j$, for some $j\in \{ 1,2\}$.
Similarly, the number of 8-tuples in $\mathcal{S}'$
with $\dist_{B'}(u_j,f_j)=3$ or
with $\dist_{B'}(w_j,g_j)=3$ for some $j\in \{ 1,2\}$ is
$O(r^4\kmax^3 M_2)$.
Hence Condition (II${}'$) fails for $O(r^4 \kmax^3 M_2)$ 8-tuples,
which (together with the upper bound on $|\mathcal{S}'|$) 
implies that
the number of legal reverse d-switchings from $B'$ is
\begin{equation}
\label{nr-reverse}
 (r-1)^2 M_2^2\left(1 + O\left(\frac{d\kmax(\kmax + r) + r^2\kmax^3}{M_2}\right)\right).
\end{equation}
Comparing the error terms from (\ref{nr-forward})  and (\ref{nr-reverse}),
we see that
the error term from the reverse d-switchings is largest, since $1/M \leq \kmax/M_2$.
Taking the ratio of (\ref{nr-forward}) and (\ref{nr-reverse}) completes the proof. 
\end{proof}

We can now prove our main result.  The proof
is similar to those in related enumeration results
such as~\cite{sissy}.  We present the proof in full 
in order to demonstrate how the factors of $r$ arise in the error bounds
(since previous results only dealt with $r=2$, or assumed that $r$ was constant).
The following summation lemma from~\cite{GMW} will be needed.
(The statement has been adapted slightly from that given in~\cite{GMW}, 
without affecting the proof given there.)

\begin{lemma}[{\cite[Corollary 4.5]{GMW}}]\label{sumcor2}
Let $N\geq 2$ be an integer and, for $1\leq i\leq N$, let real
numbers $A(i)$, $C(i)$ be given such that $A(i)\geq 0$ and
$A(i)-(i-1)C(i) \ge 0$.
Define 
$A_1 = \min_{i=1,\ldots, N} A(i)$, $A_2 = \max_{i=1,\ldots, N} A(i)$,
$C_1 = \min_{i=1,\ldots, N} C(i)$ and $C_2=\max_{i=1,\ldots, N} C(i)$.
Suppose that there exists a real number $\hat{c}$ with 
$0<\hat{c} < \tfrac{1}{3}$ such that 
$\max\{ A_2/N,\, \abs{C_1},\, \abs{C_2}\} \leq \hat{c}$.
Define $n_0,\ldots ,n_N$ by $n_0=1$ and
\[ n_i = \frac{1}{i}\left(A(i)-(i-1)C(i)\right)\, n_{i-1} \]
for $1\leq i\leq N$.  Then
\[ \varSigma_1 \leq \sum_{i=0}^N n_i\leq \varSigma_2, \]
where
\begin{align*}
\varSigma_1 &= \exp\left( A_1 - \tfrac{1}{2} A_1 C_2 \right)
               - (2e\hat{c})^N,\\
 \varSigma_2 &= \exp\left( A_2 - \tfrac{1}{2} A_2 C_1 +
              \tfrac12 A_2 C_1^2 \right) + (2e\hat{c})^N.
\end{align*}
\end{lemma}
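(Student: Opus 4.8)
The plan is to rewrite each $n_i$ in closed product form and then sandwich the sum between two sums with \emph{constant} coefficients, which can be evaluated exactly by the binomial theorem. Unwinding the recurrence gives
\[ n_i = \prod_{j=1}^{i}\frac{A(j)-(j-1)C(j)}{j}, \]
and the hypothesis $A(j)-(j-1)C(j)\ge 0$ shows every factor, hence every $n_i$, is nonnegative. Since the $j$-th factor is nondecreasing in $A(j)$ and nonincreasing in $C(j)$, replacing $A(j)$ by $A_2$ and $C(j)$ by $C_1$ can only increase it, while replacing $A(j)$ by $A_1$ and $C(j)$ by $C_2$ can only decrease it. I would therefore introduce the constant-coefficient comparison sequences
\[ \bar n_i = \frac{1}{i!}\prod_{j=0}^{i-1}(A_2 - jC_1), \qquad \underline n_i = \frac{1}{i!}\prod_{j=0}^{i-1}(A_1 - jC_2), \]
so that $\underline n_i \le n_i \le \bar n_i$ holds term by term \emph{as long as the comparison factors stay nonnegative}. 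Using the constraint once more (it forces $A_2\ge (N-1)C_1$ when $C_1>0$, and trivially $A_2-jC_1\ge A_2\ge 0$ when $C_1\le 0$) one checks that all factors of $\bar n_i$ are nonnegative for $i\le N$.

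The engine of the proof is the binomial identity: for a constant $C$ with $\abs{C}<1$,
\[ \sum_{i=0}^{\infty}\frac{1}{i!}\prod_{j=0}^{i-1}(A-jC) = \sum_{i=0}^{\infty}\binom{A/C}{i}C^i = (1+C)^{A/C} = \exp\!\left(\tfrac{A}{C}\log(1+C)\right). \]
Expanding $\tfrac{1}{C}\log(1+C) = 1 - \tfrac{C}{2} + \tfrac{C^2}{3} - \cdots$ and comparing with the truncated series, elementary inequalities valid for $\abs{C}<\tfrac13$ yield
\[ (1+C_2)^{A_1/C_2} \ge \exp\!\left(A_1 - \tfrac12 A_1 C_2\right), \qquad (1+C_1)^{A_2/C_1} \le \exp\!\left(A_2 - \tfrac12 A_2 C_1 + \tfrac12 A_2 C_1^2\right), \]
which are exactly the leading terms of $\varSigma_1$ and $\varSigma_2$. (The degenerate cases $C=0$ and $A=0$ are handled separately, the former by continuity of $(1+C)^{A/C}$ as $C\to 0$.)

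It remains to pass from the infinite binomial sums to the truncated sums $\sum_{i=0}^N$, which is where the $(2e\hat{c})^N$ error terms enter. For the upper bound I write $\sum_{i=0}^N \bar n_i = (1+C_1)^{A_2/C_1} - \sum_{i>N}\bar n_i$ and bound the tail using $\abs{\bar n_N}\le (2N\hat{c})^N/N!\le (2e\hat{c})^N$ (from $A_2,\,(N{-}1)\abs{C_1}\le N\hat{c}$ and $N!\ge (N/e)^N$) together with the fact that, for $i\ge N$, the ratios $\abs{\bar n_{i+1}}/\abs{\bar n_i}=\abs{A_2-iC_1}/(i+1)$ stay below $2\hat{c}$; the resulting geometric tail is absorbed into $(2e\hat{c})^N$. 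The same estimate gives the lower bound whenever the factors of $\underline n_i$ remain nonnegative up to $i=N$ (in particular when $C_2\le 0$ or $A_1\ge NC_2$). In the remaining case $C_2>0$ with $A_1<NC_2$ I instead truncate at $i^{\ast}=\lfloor A_1/C_2\rfloor+1$, the last index with nonnegative factors, so that $\sum_{i=0}^N n_i \ge \sum_{i=0}^{i^{\ast}}\underline n_i$; here the dropped binomial tail $\sum_{i>i^{\ast}}\underline n_i$ is alternating with a negative leading term, so discarding it only \emph{increases} the sum and the bound $\ge \exp(A_1 - \tfrac12 A_1 C_2)$ follows at no tail cost.

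The main obstacle is exactly this truncation bookkeeping: the comparison sequences cease to be sign-definite once $i$ passes $A/C$, so the clean ``term-by-term comparison plus closed-form infinite binomial sum'' must be supplemented by a short case analysis on the sign of $C_1,C_2$ and on whether the nonnegative range reaches $N$, plus the observation that the statement is vacuous unless $2e\hat{c}<1$ (since otherwise $(2e\hat{c})^N$ already dominates any admissible partial sum), in which regime $2\hat{c}<1/e$ makes the geometric tail comfortably smaller than $(2e\hat{c})^N$. Everything else is the standard product-to-binomial comparison, and once the truncation point is placed correctly the two bounds drop out.
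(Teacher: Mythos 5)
The paper itself never proves this lemma: it is imported (``adapted slightly'') from [GMW, Corollary~4.5], so your proposal has to stand on its own. Its architecture is the natural one for such summation lemmas and most of it is sound: the product form of $n_i$, the termwise sandwich by the constant-coefficient sequences $\bar n_i$ and $\underline n_i$ (including the correct observation that the hypothesis at $i=N$ forces $A_2\ge (N-1)C_1$ when $C_1>0$), the binomial-series evaluation $(1+C)^{A/C}$, the elementary inequalities comparing it with the stated exponentials for $\abs{C}<\tfrac13$, and the truncation at $i^{\ast}=\lfloor A_1/C_2\rfloor+1$ in the lower bound (the discarded alternating tail is indeed negative, by pairing consecutive terms).

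The genuine gap is in the tail bookkeeping, and it is load-bearing. Your geometric tail is at most $\abs{\bar n_N}\cdot 2\hat{c}/(1-2\hat{c})$, and with your estimate $\abs{\bar n_N}\le (2e\hat{c})^N$ (from $N!\ge (N/e)^N$) this is below $(2e\hat{c})^N$ only when $\hat{c}\le\tfrac14$, not when $2\hat{c}<1/e$ as you use it. You cover the remaining range by asserting that the lemma is vacuous whenever $2e\hat{c}\ge 1$, and that assertion is false. Take $A(i)=N\hat{c}$ and $C(i)=-\hat{c}$ for all $i$, with $\hat{c}=\tfrac15$, so $2e\hat{c}\approx 1.087>1$: all hypotheses hold, $n_i=\binom{N+i-1}{i}5^{-i}$, and $\sum_{i=0}^N n_i=(5/4)^N\bigl(1-o(1)\bigr)$, while $(2e\hat{c})^N\approx 1.087^N$ and $\varSigma_1=e^{0.22N}-(2e/5)^N\approx 1.246^N$. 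Both bounds carry real content here (the lower bound is nearly sharp), so nothing is vacuous, and for $\hat{c}$ in the window $[\tfrac{1}{2e},\tfrac14)$ your written argument establishes neither inequality. In the remaining window $[\tfrac14,\tfrac13)$ your vacuousness conclusion happens to be true, but you assert the domination rather than prove it.

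Both defects are repairable without changing your strategy. Cleanest: replace $N!\ge (N/e)^N$ by $N!\ge\sqrt{2\pi N}\,(N/e)^N$, so that $\abs{\bar n_N}\le (2e\hat{c})^N/\sqrt{2\pi N}$ and the whole tail is at most $\frac{2}{\sqrt{4\pi}}(2e\hat{c})^N<(2e\hat{c})^N$ for every $N\ge 2$ and $\hat{c}<\tfrac13$; then no case split on $\hat{c}$ is needed at all. Alternatively, keep two regimes but split at $\hat{c}=\tfrac14$ and actually prove triviality for $\hat{c}\ge\tfrac14$: each factor $A_2-jC_1$ is at most $(N+j)\hat{c}$, so $\sum_{i=0}^N n_i\le\sum_{i\ge 0}\hat{c}^i\binom{N+i-1}{i}=(1-\hat{c})^{-N}\le (2e\hat{c})^N$ because $2e\hat{c}(1-\hat{c})\ge 3e/8>1$ on that range, and likewise $\varSigma_1\le\exp\bigl(N\hat{c}(1+\hat{c}/2)\bigr)-(2e\hat{c})^N\le 0$ there.
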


\bigskip

\begin{proof}[Proof of Theorem~\ref{main}]\
First we prove that
\begin{equation}
 \sum_{d=0}^{N_2} |\mathcal{C}_d| =  |\mathcal{C}_0|\, 
   \exp\left( \frac{(r-1)^2M_2^2}{4M^2} + 
    O\left(\frac{r^{4}\kmax^4(\kmax + r)}{M}\right)\right). 
\label{aim}
\end{equation}
Let $d'$ be the first value of $d\leq N_2$ for which $\mathcal{C}_{d}=\emptyset$,
or $d=N_2+1$ if no such value of $d$ exists.  
We saw in Lemma~\ref{d-switch-easy} that any $B\in\mathcal{C}_d$
can be converted to some $B'\in\mathcal{C}_{d-1}$ using a
d-switching.
Hence $\mathcal{C}_{d}=\emptyset$ for $d'\leq d\leq N_2$.
In particular, (\ref{aim}) holds if $\mathcal{C}_{0}=\emptyset$,
so we assume that $d'\geq 1$.

By Lemma~\ref{d-switch-easy}, there is some uniformly bounded function 
$\alpha_d$ such that 
\begin{equation}\label{drat}
\frac{|\mathcal{C}_{d}|}{|\mathcal{C}_{0}|}
  =  
\frac{1}{d}\, \frac{|\mathcal{C}_{d-1}|}{|\mathcal{C}_{0}|}\,
 \left(A(d) -  (d-1) C(d) \right)
\end{equation}
for $1\leq d\leq N_2$, where
\[
  A(d) = \frac{(r-1)^2M_2^2- \alpha_d\, r^4 \kmax^3 \, M_2}{4M^2},
  \quad C(d) = \frac{\alpha_d \, r^2 \kmax (\kmax + r)\, M_2}{4M^2}
\]
for $1\leq d < d'$, and $A(d) = C(d) = 0$ for $d'\leq d \leq N_2$.

We wish to apply Lemma~\ref{sumcor2}.
It is clear that $A(d)-(d-1)C(d)\geq 0$, from (\ref{drat}) if
$1\leq d < d'$ or by definition, if $d'\leq d\leq N_2$.
If $\alpha_d \geq 0$ then $A(d) \geq A(d) - (d-1)C(d) \geq 0$ by
(\ref{drat}), while if $\alpha_d < 0$ then $A(d)$ is nonnegative by
definition.
Now define $A_1, A_2, C_1, C_2$ by taking the minimum and maximum
of $A(d)$ and $C(d)$ over $1\leq d \leq N_2$. 
Let $A\in [A_1,A_2]$ and $C\in [C_1,C_2]$
and set $\hat{c}=\frac{1}{20}$.   Since $A = (r-1)^2 M_2^2/4M^2 + o(1)$
and $C = o(1)$, we have that $\max\{ A/N_2,\, |C|\}\leq \hat{c}$
for $M$ sufficiently large, by the definition of $N_2$.  
Hence Lemma~\ref{sumcor2} applies and gives an upper bound
\[ \sum_{d = 0}^{N_2} \frac{|\mathcal{C}_{d}|}
   {|\mathcal{C}_{0}|} 
 \leq \exp\left( \frac{(r-1)^2M_2^2}{4M^2} + 
    O\left(\frac{r^4\kmax^4(\kmax + r) }{M}\right)\right) 
        + O\bigl( (e/10)^{N_2}\bigr).\]
Since 
$(e/10)^{N_2}\leq (e/10)^{3\log M} \leq M^{-1}$, this gives
\[ \sum_{d = 0}^{N_2} \frac{|\mathcal{C}_{d}|}
   {|\mathcal{C}_{0}|} 
  \leq \exp\left( \frac{(r-1)^2M_2^2}{4M^2} + 
    O\left(\frac{r^{4}\kmax^4(\kmax + r)}{M}\right)\right) .
\]
In the case that $d'=N_2 + 1$, the lower bound given by Lemma~\ref{sumcor2}
is the same within the stated error term, which establishes (\ref{aim}) in this case.
 
This leaves the case that $1\leq d' \leq N_2$.   Considering the
analysis of the reverse switchings from Lemma~\ref{d-switch-easy},
this case can only arise if
\[ M_2 = O(d'\kmax(\kmax + r) + r^2\kmax^3) = 
      O\left(\kmax(\kmax + r)(r^2\kmax^2 + \log M)\right).\]
But then
\[ \frac{(r-1)^2M_2^2}{4M^2} = 
    O\left(\frac{r^2\kmax^2(\kmax + r)^2(r^2\kmax^2 + \log M)^2}{M^2} \right) 
    = O\left(\frac{r^4\kmax^4}{M}\right),
\]
so the trivial lower bound of 1 matches the upper bound
within the error term.  Hence (\ref{aim}) also holds when $1\leq d'\leq N_2$.  

Therefore (\ref{aim}) holds in both cases. Combining (\ref{base}),
Theorem~\ref{easy} and (\ref{partition}) gives
\begin{align*}
& |\mathcal{L}_r(\kvec)| \\
 &= \frac{|\mathcal{C}_0|}{(M/r)!}\\
        &= \frac{|\Brkplus|}{(M/r)!}\, 
   \exp\left( -\frac{(r-1)^2M_2^2}{4M^2} + 
    O\left(\frac{r^{4}\kmax^4(\kmax + r)}{M}\right)\right)\\
        &= \frac{M!}{(M/r)!\, (r!)^{M/r}\, \prod_{j=1}^n k_j!}\, 
   \exp\left( -\frac{(r-1)M_2}{2M} - \frac{(r-1)^2M_2^2}{4M^2} + 
    O\left(\frac{r^{4}\kmax^4(\kmax + r)}{M}\right)\right),
\end{align*}
completing the proof.
\end{proof}

As a corollary of Theorem~\ref{easy} and (\ref{aim}) we obtain 
the following result regarding the girth of bipartite graphs.

\begin{corollary}
Under the conditions of Theorem~\ref{main}, the probability that a randomly 
chosen element of $\Brk$ has no 4-cycle, and hence has girth at least 6, is
\[ \frac{|\mathcal{C}_0|}{|\Brk|} = 
  \exp\left( - \frac{(r-1)^2\, M_2^2}{4 M^2} + O\left(\frac{r^4\kmax^4(\kmax + r)}{M}\right)\right).
\]
\label{4cycle}
\end{corollary}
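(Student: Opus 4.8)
The plan is to combine the two ingredients named in the statement, namely Theorem~\ref{easy} and equation~(\ref{aim}), after first identifying $\mathcal{C}_0$ with the no-4-cycle bipartite graphs in $\Brk$.

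First I would observe that $\mathcal{C}_0$ is precisely the set of bipartite graphs in $\Brk$ that contain no 4-cycle. Indeed, any $B\in\Brk$ with no 4-cycle automatically satisfies properties (i)--(v): a copy of $K_{3,2}$ or $K_{2,3}$ would contain a 4-cycle, so (i) and (ii) hold, while (iii)--(v) are vacuous in the absence of 4-cycles. Hence such a $B$ lies in $\Brkplus$ and, having zero 4-cycles, contributes to $\mathcal{C}_0$; conversely $\mathcal{C}_0\subseteq\Brkplus\subseteq\Brk$ consists by definition of graphs with no 4-cycle. Since a bipartite graph has no odd cycles, a graph with no 4-cycle has shortest cycle of length at least $6$, so its girth is at least $6$. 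Thus the probability that a uniformly random $B\in\Brk$ has girth at least $6$ equals $|\mathcal{C}_0|/|\Brk|$.

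Next I would invoke the partition~(\ref{partition}) together with~(\ref{aim}). Since $\Brkplus$ is partitioned by the sets $\mathcal{C}_d$, equation~(\ref{aim}) gives
\[ |\Brkplus| = \sum_{d=0}^{N_2}|\mathcal{C}_d| = |\mathcal{C}_0|\,\exp\!\left(\frac{(r-1)^2M_2^2}{4M^2} + O\!\left(\frac{r^4\kmax^4(\kmax+r)}{M}\right)\right), \]
and hence
\[ |\mathcal{C}_0| = |\Brkplus|\,\exp\!\left(-\frac{(r-1)^2M_2^2}{4M^2} + O\!\left(\frac{r^4\kmax^4(\kmax+r)}{M}\right)\right). \]

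Finally I would divide by $|\Brk|$ and apply Theorem~\ref{easy}, which asserts $|\Brkplus|/|\Brk| = 1 + O(r^5\kmax^4/M)$. Taking logarithms, $\log\bigl(1 + O(r^5\kmax^4/M)\bigr) = O(r^5\kmax^4/M)$, and since $r^5\kmax^4 \le r^4\kmax^4(\kmax+r)$ this contribution is absorbed into the existing error term. Combining the two estimates yields
\[ \frac{|\mathcal{C}_0|}{|\Brk|} = \exp\!\left(-\frac{(r-1)^2M_2^2}{4M^2} + O\!\left(\frac{r^4\kmax^4(\kmax+r)}{M}\right)\right), \]
as required. There is no substantial obstacle here, as both quantitative inputs are already in hand; the only points needing care are the set-theoretic identification of $\mathcal{C}_0$ with the no-4-cycle graphs and the routine check that each error term arising is dominated by $O\bigl(r^4\kmax^4(\kmax+r)/M\bigr)$.
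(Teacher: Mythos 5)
Your proposal is correct and follows essentially the same route as the paper, which derives Corollary~\ref{4cycle} directly from Theorem~\ref{easy} and equation~(\ref{aim}) via the partition~(\ref{partition}); the paper leaves these details implicit and you have filled them in accurately, including the identification of $\mathcal{C}_0$ with the $4$-cycle-free graphs in $\Brk$ and the absorption of the $O(r^5\kmax^4/M)$ error into $O\bigl(r^4\kmax^4(\kmax+r)/M\bigr)$.
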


McKay, Wormald and Wysocka~\cite[Corollary 3]{MWW} proved
the following: 
if $(d-1)^7 = o(n)$ as $n\to\infty$ along the positive even integers then
that the probability that
a random $d$-regular bipartite graph on $n$ vertices
has girth greater than $g$ is
\[ \exp\left( - \frac{(d-1)^4}{4} + o(1)\right).\]
 (The conclusion was known much earlier for constant $d$; see~\cite{Wthesis}.) 
Corollary~\ref{4cycle} can be seen as a generalisation 
of the $g=4$ case of~\cite[Corollary 3]{MWW}
to bipartite graphs which are irregular on one side of the 
vertex bipartition, and are sufficiently sparse.
When the bipartite graph is $d$-regular (with $\kmax = r = d$),
the condition of Corollary~\ref{4cycle} becomes $d^8 = o(n)$,
which is slightly more restrictive than that of~\cite{MWW}.

\subsection*{Acknowledgements}
We are very grateful to Brendan McKay for suggesting 
that use of the incidence
matrix of the hypergraph would simplify the calculations.
We thank the referee for their helpful comments.

\end{document}